\numberwithin{equation}{section}
\theoremstyle{plain}
\newtheorem*{theorem_A}{Theorem A}
\newtheorem*{theorem_B}{Theorem B}
\newtheorem{theorem}{Theorem}[section]
\newtheorem{prop}[theorem]{Proposition}
\newtheorem{lemma}[theorem]{Lemma}
\theoremstyle{definition}
\theoremstyle{remark}
\theoremstyle{remark}
\newtheorem*{rem*}{Remark}
\newtheorem{remark}{Remark}[section]
\title[Multiple sampling and interpolation]{Multiple sampling and interpolation in weighted Fock spaces of entire functions}
\author{}
\date{}
\newcommand{\numberofpoints}[4]{n\left( #1,#2,#3,m_{#4}\right)}
\newcommand{\density}{1/\pi}
\newcommand{\ddensity}{\frac{1}{\pi}}
\newcommand{\fock}[1]{\ensuremath{\mathcal{F}_{#1}^{2}}}
\newcommand{\pairsetmult}[1]{(#1,\,m_{#1})}
\newcommand{\abs}[1]{\ensuremath{\left| #1 \right| }}
\newcommand{\norm}[1]{\lVert#1\rVert}
\newcommand{\di}{i}
\newcommand{\dbarstar}[2]{
	\ifnum\pdfstrcmp{#2}{1}=0
	{\bar{\partial}^{*}\hspace{0.07em}}\hspace{-0.2em}{#1}
	\else
	{\bar{\partial}^{*^{\hspace{0.05em}(#2)}}}\hspace{-0.2em}{#1}	\fi
}
\newcommand{\dbarstarr}[3]{
	\ifnum\pdfstrcmp{#2}{1}=0
	{\bar{\partial}_{#3}^{*}\hspace{0.07em}}\hspace{-0.0em}{#1}
	\else
	{\bar{\partial}_{#3}^{*^{\hspace{0.05em}(#2)}}}\hspace{-0.2em}{#1}	\fi
}
\newcommand\restr[2]{{%
		\left.\kern-\nulldelimiterspace %
		#1 %
		\vphantom{\big|} %
		\right|_{#2} %
}}
\newcommand{\field}[1]{\mathbb{#1}}
\newcommand{\bR}{\field{R}}
\newcommand{\bN}{\field{N}}
\newcommand{\bC}{\field{C}}
\newcommand{\dA}{dA}
\newcommand{{\nLambda}}{n_\Lambda}
\def\Xint#1{\mathchoice
	{\XXint\displaystyle\textstyle{#1}}%
	{\XXint\textstyle\scriptstyle{#1}}%
	{\XXint\scriptstyle\scriptscriptstyle{#1}}%
	{\XXint\scriptscriptstyle\scriptscriptstyle{#1}}%
	\!\int}
\def\XXint#1#2#3{{\setbox0=\hbox{$#1{#2#3}{\int}$}
		\vcenter{\hbox{$#2#3$}}\kern-.5\wd0}}
\def\dashint{\Xint-}
\author[L. A. Escudero]{Luis Alberto Escudero}
\address{Acoustics Research Institute, Austrian Academy of Sciences,
Wohllebengasse 12-14 A-1040, Vienna, Austria}
\email{lescudero@kfs.oeaw.ac.at}
\author[A. Haimi]{Antti Haimi}
\address{Acoustics Research Institute, Austrian Academy of Sciences,
Wohllebengasse 12-14 A-1040, Vienna, Austria}
\email{ahaimi@kfs.oeaw.ac.at}
\author[J. L. Romero]{Jos\'{e} Luis Romero}
\address{Faculty of Mathematics,
University of Vienna,
Oskar-Morgenstern-Platz 1,
A-1090 Vienna, Austria\\and\\
Acoustics Research Institute, Austrian Academy of Sciences,
Wohllebengasse 12-14 A-1040, Vienna, Austria}
\email{jose.luis.romero@univie.ac.at,
jlromero@kfs.oeaw.ac.at}
\thanks{L. A. E. and J. L. R. gratefully acknowledge support from the Austrian Science Fund (FWF): P 29462 - N35 and
Y 1199, while A. H. was supported by the Austrian Science Fund (FWF): P 31153-N35.
A. H. and J. L. R. also acknowledge support from the WWTF grant INSIGHT (MA16-053).
}
\begin{document}
\begin{abstract}
We characterize sampling and interpolating sets with derivatives in weighted Fock spaces on the complex plane in terms of their weighted Beurling densities.
\end{abstract}
\maketitle

\section{Introduction}

\subsection{Results and context}

Let $\phi:\bC\to\bR$ be a subharmonic function with Laplacian bounded above and below by positive constants. The \emph{weighted Fock space} of entire functions is
$$
\fock{\phi} :=  \left\{f:\bC\to\bC : f \text { holomorphic, } \norm{f}_\phi^2 :=\int_{\mathbb{C}}|f(z)|^{2} e^{-\phi(z)} \dA(z)<\infty\right\},
$$
where $\dA$ denotes the Lebesgue measure on $\bC$. We study sampling and interpolation on weighted Fock spaces, where we sample or interpolate using not only the function values, but also the values of its derivatives. Such process is sometimes called
multiple or Hermite sampling and interpolation. While sampling and interpolation theory provide a mathematical foundation for the tasks of digitalization and encoding of analog signals into bit-streams, the use of derivatives incorporates
the \emph{trend} of the signal as well, and is well studied in Paley-Wiener spaces and shift-invariant spaces on the real line; see, e.g., \cite{Raz95, AGH17,SR16,GRS20}.

In this article, we characterize the sets that allow for multiple sampling and interpolation in weighted Fock spaces in terms of certain densities, provided that the number of derivatives considered at each sampling point is bounded. The precise assumptions and definitions are given in Section \ref{sec:definitions_and_results}.

\medskip

The prime example of a Fock space is the \emph{Bargmann-Fock} space, where $\phi(z)=\alpha |z|^2$, and $\alpha > 0$ \cite{MR2934601}. For this weight,
the Heisenberg group acts irreducibly on $\fock{\phi}$, which results in a important homogeneity property called \emph{translation invariance}
\cite[Section 2.6]{MR2934601}. More precisely, $\fock{\phi}$ is invariant under the so-called \emph{Bargmann-Fock} shifts \cite{MR2934601}:
\begin{align}\label{eq_intro_shift}
f(z) \mapsto e^{-\tfrac{\alpha}{2} |w|^2 + \alpha z \bar{w}} f(z-w),
\qquad w \in \mathbb{C}.
\end{align}
Necessary and sufficient conditions for sampling and interpolation without derivatives are fully described in terms of planar Beurling densities \cite{seip1, seip2}. Specifically, the lower and upper Beurling densities 
of a set $\Lambda \subseteq \mathbb{C}$ are
\begin{align}\label{eq_into_B1}
D^-(\Lambda) &:= \liminf_{R \longrightarrow \infty}
\inf_{z \in \mathbb{C}} \frac{\# \left( \Lambda \cap B(z,R) \right)}
{\pi R^2},\\
\label{eq_into_B2}
D^+(\Lambda) &:= \limsup_{R \longrightarrow \infty}
\sup_{z \in \mathbb{C}} \frac{\# \left( \Lambda \cap B(z,R) \right)}
{\pi R^2},
\end{align}
where $B(z,R)$ denotes the Euclidean ball with center $z$ and radius $R$. For
$\phi(z)=\alpha |z|^2$ and a \emph{separated} set $\Lambda$, the existence of two sampling constants $A,B>0$ leading to a sampling inequality
\begin{align}\label{eq_intro_samp}
A\|f\|_{\phi}^{2} \leq \sum_{\lambda \in \Lambda}   \left|{f(\lambda)}\right|^{2}  e^{-\phi(\lambda)} \leq B\|f\|_{\phi}^{2}, \qquad f \in \fock{\phi},
\end{align}
is completely characterized by the density condition 
$D^-(\Lambda) > \frac{\alpha}{\pi}$. Similarly, the density condition
$D^+(\Lambda) < \frac{\alpha}{\pi}$ completely characterizes the validity of the following interpolation property: given a sequence $\{c_\lambda: \lambda \in \Lambda\} \subset \mathbb{C}$ satisfying the growth condition
$\sum_{\lambda \in \Lambda} |c_\lambda|^2 e^{- \phi(\lambda)} < \infty$
there exists (at least) one function $f \in \fock{\phi}$ such that
\begin{align}\label{eq_intro_int2}
f(\lambda)=c_\lambda, \qquad \lambda \in \Lambda.
\end{align}
Slightly more technical formulations of these density characterizations apply to arbitrary, possibly non-separated sets \cite{seip1, seip2}. 
These results parallel Beurling's sampling and interpolation theorems for the Paley-Wiener space
of square integrable functions on the real line having Fourier transforms supported on the unit interval, with the difference that the latter results do not provide a complete characterization in terms of densities, as the necessary density condition involves a non-strict inequality while the sufficient involves a strict one \cite{be66,be89} \footnote{See also \cite{landau67} for necessary density conditions for sampling and interpolation in Paley-Wiener spaces with arbitrary spectra.}. Translation invariance plays a key role in \cite{seip1, seip2}, and the arguments do not seem to extend easily to more general weights. 

For the classical weight $\phi(z)=\alpha |z|^2$, sampling and interpolation with bounded multiplicities are studied in \cite{seip3}, while unbounded multiplicities are treated in \cite{MR3558232
}. In this setting the sampling \eqref{eq_intro_samp}
and interpolation equations \eqref{eq_intro_int2} involve not only the function $f$ but also its \emph{translation covariant derivative}
\begin{align}\label{eq_intro_dera}
f(z) \mapsto \alpha \bar{z}f(z) - \partial f(z)
\end{align}
applied iteratively. The operator \eqref{eq_intro_dera} is called covariant, because it commutes with the Bargmann-Fock shifts \eqref{eq_intro_shift}. When considering multiplicities, the characterization of sampling and interpolation is in terms of weighted variants of the planar Beurling densities \eqref{eq_into_B1}, \eqref{eq_into_B2}, where each point $\lambda$ is counted multiply, according to the number of derivatives that are evaluated at $\lambda$ \cite{seip3}. (See Section \ref{sec_is} for more details.)

Sampling and interpolation on Fock spaces with general weights has been studied in \cite{berndtsson-ortega-95} and \cite{ortega-seip-98}. The characterization is in terms of variants of the Beurling densities \eqref{eq_into_B1}, \eqref{eq_into_B2}, where
the factor $\pi R^2$ is replaced by the weighted measure
\begin{align}\label{eq_intro_factor}
\int_{B(z,R)} \Delta \phi \,dA = \int_{B(z,R)} \partial \bar\partial \phi \,dA.
\end{align}
As $\partial \bar \partial \big[ \alpha |z|^2 \big] = \alpha$, Beurling densities defined using \eqref{eq_intro_factor} extend the classical ones \eqref{eq_into_B1}, \eqref{eq_into_B2} up to normalization. For general weights, the lack of translation invariance demands new techniques, such as the introduction of \emph{approximate translation operators} \cite{ortega-seip-98}, 
and $\bar{\partial}$-surgery together with Riesz decompositions \cite{berndtsson-ortega-95} (see Section \ref{sec:preliminaries}).

In this article we simultaneously extend the results from \cite{seip3, berndtsson-ortega-95, ortega-seip-98} to study sampling and interpolation with multiplicities on Fock spaces with general weights.
Our starting point is the observation that, for the classical weight $\phi(z)=\alpha|z|^2$, the covariant derivative \eqref{eq_intro_dera} is the formal adjoint of the $\bar{\partial}$-operator with respect to the scalar product that induces the norm of $\fock{\phi}$. Explicitly,
\begin{align}\label{eq_intro_der}
\dbarstarr{f}{1}{\phi}:=-e^{\phi} \partial\left(e^{-\phi} f \right).
\end{align}
We consider this operator for general weights $\phi$, and study
sampling and interpolation involving iterated applications of 
$\bar{\partial}^*_\phi$. We derive a characterization of these properties in terms of a suitable variant of the Beurling densities \eqref{eq_into_B1}, \eqref{eq_into_B2}, that counts points multiply according to the number of concerned derivatives as in \cite{seip3}, and weights balls with the Laplacian factor \eqref{eq_intro_factor} as in \cite{berndtsson-ortega-95, ortega-seip-98}.

\subsection{Motivation}
One main motivation for our results is validating the differential operator \eqref{eq_intro_der} as an adequate replacement for the covariant derivative
\eqref{eq_intro_dera}, by showing that it leads to similar sampling and interpolation properties. Other candidates for this role are the differential operators defined as ordinary differentiation at the origin, conjugated with the approximate translation operators from \cite{ortega-seip-98}. This second choice is however unnatural, as it arbitrarily distinguishes the origin, and leads to complicated compatibility issues (stemming from the fact that approximate translation operators are not associated with a group representation).
On the other hand, although they lack the simplicity of \eqref{eq_intro_der}, differential operators associated with approximate translations would allow a more straightforward generalization of the proofs of \cite{berndtsson-ortega-95, ortega-seip-98}. In any case, one can use our present results to show a posteriori that both differential operators - \eqref{eq_intro_der} and the ones defined in terms of approximate translations - do lead to similar sampling and interpolation theorems.

Fock spaces find important applications in the simultaneous time-frequency analysis of real variable functions. The most successful applications concern the classical weights and \emph{Gabor systems}, which are structured functional dictionaries for $L^2(\mathbb{R})$,
\begin{align}\label{eq_intro_gabor}
\mathcal{G}(g,\Lambda)
=\big\{ e^{2 \pi i b \cdot} g(\cdot - a) : \lambda=(a,b) \in \Lambda \big\}
\end{align}
generated by the Gaussian function $g(t) = e^{-\pi t^2}$, $t \in \mathbb{R}$, and their \emph{time-frequency shifts} along a given set of nodes $\Lambda \subseteq \mathbb{R}^2$. By means of the \emph{Bargmann transform}, the spanning properties of \eqref{eq_intro_gabor} can be reformulated as sampling and interpolation properties of $\Lambda$ on the Fock space with weight $\phi(z)=\pi|z|^2$ \cite{dagr88}, and thus characterized in terms of Beurling densities \cite{seip1,seip2, lyub92}. Multiple sampling and interpolation on $\fock{\phi}$ concerns in turn the spanning properties of \emph{multi-window Gabor systems}, where the Gaussian function is supplemented with additional Hermite functions \cite[Remark 1]{seip3}. Other weights $\phi(z)=\alpha\pi|z|^2$ allow a similar analysis for
the general Gaussian function $g(t) = e^{-a t^2}$. Applications of more general weighted Fock spaces to time-frequency analysis are more recent, and mainly connected to non-linear properties of the so-called short-time Fourier transform, where for example weights are chosen in a signal dependent fashion, see e.g., \cite{GR}. We expect our current results to find applications in this direction.

\subsection{Organization and technical overview}
In Section \ref{sec:definitions_and_results} we present the precise definitions and assumptions as well as the main results.
Section \ref{sec:preliminaries} provides useful estimates from potential theory, which are important to treat derivatives of functions in Fock spaces. The sufficiency of the density conditions for sampling and interpolation is shown under additional technical assumptions in Section \ref{sec_suf}, following closely the arguments from \cite{berndtsson-ortega-95}. The most important task here is showing that the operator \eqref{eq_intro_der} is suitably compatible with that line of argument, which is based on H\"ormander's $L^2$-estimates for $\overline{\partial}$\cite{Hormander}. The necessity of the density conditions for multiple sampling and interpolation in the weighted case is challenging, as the classical approach relies on translation invariance \cite{seip3}, and
the one in \cite{ortega-seip-98} on a subtle form of approximate translation invariance. Instead of adapting the delicate proof from \cite{ortega-seip-98}, in Section \ref{sec_nec} we develop a perturbation argument that allows us to reduce the problem to the case of no multiplicities. This line of reasoning seems to provide a more direct proof of the necessary density conditions for multiple sampling and interpolation even for the classical weights \cite{seip3}. The main results are finally proved in Section \ref{sec:proof_main_result}, where the remaining technical assumptions are removed. Section \ref{sec:Bell} is an appendix listing auxiliary results related to iterated derivatives of composite functions.

\section{Definitions and main results}\label{sec:definitions_and_results}
\subsection{Sets with multiplicity}

A \emph{set with multiplicities} is a pair $\left(\Lambda, m_{\Lambda}\right)$,
where $\Lambda \subseteq \bC$, and $m_{\Lambda} : \Lambda \rightarrow \mathbb{N}$
is a bounded function called \emph{multiplicity function}. In problems concerning evaluations of functions at $\Lambda$, the number $m_{\Lambda}(\lambda)$ indicates how many derivatives are involved at the point $z=\lambda$. More precisely, $m_\Lambda(\lambda)=k$ indicates the interpolation or sampling of a function and its first $k-1$ derivatives at $\lambda$.

\subsection{Notation} Complex disks are denoted $B(z,r):=\{y\in\bC : |z-y|< r \}$,
with $z\in\bC$ and $r>0$. We use the notation $x\lesssim y$ if there exists a constant such that $x\leq C y$. The constant is normally allowed to depend on $\sup_{\lambda \in \Lambda} m_{\Lambda}(\lambda)$ and $\phi$, while other dependencies are noted explicitly. The precise value of unspecified constants may vary from line to line.

For a set with multiplicities $(\Lambda, m_{\Lambda})$, we denote by $\ell_\phi^2(\Lambda, m_\Lambda)$ the
space of sequences $a \equiv \big(a_{(\lambda,j)} \big)_{\lambda\in\Lambda,\, 0\leq j \leq m_{\Lambda}(\lambda)-1 }$
of complex numbers with finite norm:
$$\norm{a}_{\ell_\phi^2(\Lambda,m_\Lambda)}^2 = \sum_{\lambda\in\Lambda} \sum_{j=0}^{m_{\Lambda}(\lambda)-1} |a_{(\lambda,j)}|^2 e^{-\phi(\lambda)}.$$
When $m_\Lambda \equiv 1$ we simply write $\ell_\phi^2(\Lambda)$.

\subsection{Separated sets}
A set $\Lambda \subseteq \bC$ is called \emph{relatively separated} if
\begin{equation*}\operatorname{rel}(\Lambda):=\sup \left\{\#\left(\Lambda \cap B(z,1)\right): z \in \mathbb{C}\right\}<\infty,\end{equation*}
and it is called \emph{(uniformly) separated} if
\begin{equation*}\rho(\Lambda):=\inf \left\{\left|\lambda-\lambda^{\prime}\right|: \lambda \neq \lambda^{\prime} \in \Lambda\right\}>0.
\end{equation*}
Separated sets are relatively separated, and relatively separated sets are finite unions of separated sets.
For simplicity of notation, we write $\rho$ instead of $\rho(\Lambda)$ when no confusion can arise.
We say that a set with multiplicities $(\Lambda, m_\Lambda)$ is separated (respectively relatively separated) if $\Lambda$ is separated (respectively relatively separated).

\subsection{Wirtinger derivatives and the $\dbarstar{}{1}$ operator}\label{sec_w}

We normalize the Laplacian as
\begin{equation*}
\Delta:=\frac{1}{4}\left(\partial_{x}^{2}+\partial_{y}^{2}\right)=\partial \bar{\partial},
\end{equation*}
where $\partial$ and $\bar{\partial}$ denote the Wirtinger derivatives:
\[
\partial=\frac{1}{2}\left(\partial_{x}-\di \partial_{y}\right) \quad \text { and } \quad \bar{\partial}=\frac{1}{2}\left(\partial_{x}+\di \partial_{y}\right).
\]
For $f\in C^{1}(\bC)$ we define the operator
$$\dbarstarr{f}{1}{\phi}:=-e^{\phi} \partial\left(e^{-\phi} f \right),$$
which is the formal adjoint of the $\bar{\partial}$-operator with respect to the weighted scalar product
$$\langle f, g\rangle_{\phi}=\int f(z) \overline{g(z)} e^{-\phi(z)} \dA(z).$$
When the dependence on $\phi$ is clear from the context, we write for short $\dbarstar{}{1}$. For the classical weight $\phi(z)=\alpha|z|^2$,
$\dbarstarr{f}{1}{\phi}$ is the covariant derivative \eqref{eq_intro_dera}. 

\subsection{Interpolating sets and sampling sets}\label{sec_is}
We say that $\left(\Lambda, m_{\Lambda}\right)$ is an\emph{ interpolating set }for $\fock{\phi}$, if for any sequence
$c\in\ell_\phi^2{\pairsetmult{\Lambda}}$
there exists a function $f \in \fock{\phi} $  such that $\dbarstar{f(\lambda)}{j}=c_{(\lambda, j)}$, for all $\lambda\in\Lambda$ and  $j\in\{0,\ldots,{m_{\Lambda}(\lambda)-1}\}$.
We say that $\left(\Lambda, m_{\Lambda}\right)$ is a \emph{sampling set} for $\fock{\phi}$, if there
exist constants $A, B>0$ such that
\begin{equation*}
A\|f\|_{\phi}^{2} \leq \sum_{\lambda \in \Lambda}   \sum_{j=0}^{m_{\Lambda}(\lambda)-1} \left|\dbarstar{f(\lambda)}{j}\right|^{2}  e^{-\phi(\lambda)} \leq B\|f\|_{\phi}^{2}.
\end{equation*}
For the classical weight $\phi(z)=\alpha|z|^2$, using the covariance property of $\overline{\partial}^*_{\phi}$, is easy to see that
\begin{align*}
e^{-\tfrac{1}{2}\phi(\lambda)} \cdot \dbarstar{f(\lambda)}{j}
= (-1)^j \langle f, W_\lambda e_j \rangle_\phi,
\end{align*}
where $e_j(z)=\left(\alpha^{j+1}  / {\pi}\right)  z^j$ and $W_w$ denotes the Bargmann-Fock shift by $w$ \eqref{eq_intro_shift}. 
Multiple sampling and interpolating sets with respect to the weight $\phi(z)=\alpha|z|^2$ are defined in \cite{seip3} in terms of Bargmann-Fock shifts of the normalized monomials $f_j(z) = (\alpha^j / j!)^{1/2} z^j$. Since, as in \cite{seip3}, we only consider sets with bounded multiplicity, for the classical weight $\phi(z)=\alpha|z|^2$, the present notion of multiple sampling and interpolation is equivalent to the one from \cite{seip3}.

\subsection{Beurling densities}

Given a set with multiplicities $\left(\Lambda, m_{\Lambda}\right)$, $\numberofpoints{z}{r}{\Lambda}{\Lambda}$ is defined as the number of points of the set $\Lambda$ in the disk $B(z,r)$, counted with multiplicities. That is,
$$
\numberofpoints{z}{r}{\Lambda}{\Lambda}
 = \nu \star  1_{B(0, r)} (z),
$$ where
\begin{align*}
\nu = \nu_\Lambda :=\sum_{\lambda \in \Lambda} m_\Lambda (\lambda) \cdot  \delta_{\lambda}
\end{align*}
is a weighted sum of Dirac deltas, and $1_{B(0, r)}$ is the characteristic function of the disk.
The lower and upper Beurling densities of $\left(\Lambda, m_{\Lambda}\right)$ with respect to $\phi$ are:
\begin{align}\label{lower_density}
D_{\phi}^{-}\left(\Lambda, m_{\Lambda}\right)&=\liminf _{r \rightarrow \infty} \inf _{z \in \mathbb{C}} \frac{ \numberofpoints{z}{r}{\Lambda}{\Lambda} }{\int_{B(z, r)} \Delta \phi\, \dA},
\\
\label{upper_density}
D_{\phi}^{+}\left(\Lambda, m_{\Lambda}\right)&=\limsup _{r \rightarrow \infty} \sup _{z \in \mathbb{C}} \frac{   \numberofpoints{z}{r}{\Lambda}{\Lambda}}{\int_{B(z, r)} \Delta \phi\, \dA}.
\end{align}

\subsection{Compatibility assumptions}\label{sec:assumptions}
We always assume that sets with multiplicities have bounded multiplicity functions and denote
\begin{align*}
{\nLambda} := \sup_{\lambda \in \Lambda} m_{\Lambda}(\lambda) -1.
\end{align*}
We say that $(\Lambda, m_\Lambda)$ and $\phi$ are \emph{compatible} if the following conditions hold. For ${\nLambda}=0$ or ${\nLambda}=1$, we only require that $m<\Delta\phi<M$ for some positive constants $m$, $M$.
For ${\nLambda}\geq 2$, we additionally require $\phi \in C^{{\nLambda}+1}(\mathbb{C})$ and  $\sup_{z\in\bC} |\partial_z^{j} \Delta\phi(z) |< \infty$, for all $j=1, \ldots,{\nLambda}-1$.

The conditions are chosen so that there exists $\varepsilon_r>0$ for which  \begin{equation}\label{eq:bound_potential} \left| \partial_z^j\left( \int_{B(\zeta,r)}\log|w-z|\ \Delta\phi(w)\, \dA(w)\right) \right| \leq C_r,\end{equation}
for every $\zeta \in\bC$, $z\in B(\zeta, \varepsilon_r)$ and $0\leq j \leq {\nLambda}$.

\subsection{Main results} Our goal is to derive the following characterization of sampling and interpolating sets.

\begin{theorem_A} Let $(\Lambda, m_\Lambda)$ be a set with multiplicities that is compatible with the weight $\phi$.
Then $(\Lambda, m_\Lambda)$ is interpolating for $\fock{\phi}$ if and only if it is separated and satisfies $D_{\phi}^{+}(\Lambda, m_\Lambda)<\density{}$.
\end{theorem_A}
\begin{theorem_B} Let $(\Lambda, m_\Lambda)$ be a set with multiplicities that is compatible with the weight $\phi$.
Then $(\Lambda, m_\Lambda)$ is sampling for $\fock{\phi}$ if and only if it is relatively separated and $\Lambda$ contains a separated subset $\Lambda^\prime$ satisfying $D_{\phi}^{-}\left(\Lambda^{\prime}, \restr{m_\Lambda}{\Lambda^\prime}\right)>\density{}$.
\end{theorem_B}

\section{Riesz decomposition}\label{sec:preliminaries}

The Riesz decomposition, presented below, describes the structure of subharmonic functions on bounded subdomains of the complex plane. It is an essential tool to study weighted Fock spaces, as their very definition involves a subharmonic function.

\begin{theorem}[Riesz decomposition]\label{thm:Riesz_decomp}
	Let $D$ be a domain in $\mathbb{C}$, and let $u:D\rightarrow \mathbb{R}$ be a subharmonic function. If $K\subseteq D$ is an open and relatively compact set, then there exists a harmonic function $h$ on $K$ such that
	\begin{equation*}
	u(z)=h (z)+G[\Delta u](z), \qquad z \in K,
	\end{equation*}
	where \begin{equation}\label{eq:green_potential}G[\Delta u](z) =\frac{2}{\pi} \int_{K}\log|w-z|\ \Delta u(w)\, \dA(w).\end{equation}
\end{theorem}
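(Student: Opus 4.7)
The plan is to define $v(z) := G[\Delta u](z)$ as in \eqref{eq:green_potential}, set $h := u - v$, and verify that $h$ is harmonic on $K$. The proof hinges on the classical fact that, with the normalization $\Delta = \partial\bar\partial$ used in the paper, the function $\tfrac{2}{\pi}\log|z|$ is the fundamental solution of $\Delta$ on $\mathbb{C}$, meaning $\Delta(\tfrac{2}{\pi}\log|\cdot|) = \delta_{0}$ in the sense of distributions. This follows from $\Delta = \tfrac{1}{4}\Delta_{\mathrm{st}}$ combined with the classical identity $\Delta_{\mathrm{st}}(\tfrac{1}{2\pi}\log|z|) = \delta_{0}$, where $\Delta_{\mathrm{st}} = \partial_{x}^{2} + \partial_{y}^{2}$.

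First I would recall that a subharmonic function $u$ on $D$ has a distributional Laplacian $\Delta u$ which is a non-negative Radon measure on $D$. Since $K$ is relatively compact in $D$, this measure has finite total mass on $K$, while the kernel $w \mapsto \log|w-z|$ is locally integrable; hence $v$ is a well-defined, locally integrable function on $\mathbb{C}$. Next, for any test function $\varphi \in C_{c}^{\infty}(K)$, I would apply Fubini together with the fundamental solution identity to obtain $\int v\,\Delta\varphi\,\dA = \int_{K} \varphi\,\Delta u\,\dA = \int u\,\Delta\varphi\,\dA$, so that $u - v$ has vanishing distributional Laplacian on $K$. Weyl's lemma then yields that $u - v$ agrees with a smooth harmonic function $h$ on $K$, giving the desired decomposition.

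The hard part, I anticipate, is the careful handling of the non-smooth case: when $\Delta u$ is only a Radon measure, the manipulations above require a mollification argument. One standardly approximates $u$ by smooth subharmonic functions $u_{\varepsilon} := u \ast \psi_{\varepsilon}$, for which the decomposition is immediate, and then passes to the limit using weak-$\ast$ convergence of $\Delta u_{\varepsilon}$ to $\Delta u$ combined with dominated convergence to control the logarithmic integrals. For the applications in the present paper this complication is absent, since the weight $\phi$ is at least $C^{2}$ with bounded Laplacian, so Fubini and differentiation under the integral sign apply directly, and the identification of $h$ with $u - v$ is pointwise rather than just almost everywhere.
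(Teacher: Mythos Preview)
The paper does not supply its own proof of this statement: Theorem~\ref{thm:Riesz_decomp} is quoted as a classical result from potential theory, with a pointer to the references \cite{MR1334766, MR0460672, MR1485778} immediately after the statement. Your outline is the standard argument (logarithmic potential as a particular solution of the Poisson equation, then Weyl's lemma for the remainder) and is correct; there is nothing further to compare.
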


The function $G[\Delta u]$ in \eqref{eq:green_potential} is called \emph{the logarithmic potential} of $\Delta u$ on the domain $K$. For more on Riesz decomposition, see \cite{MR1334766, MR0460672, MR1485778}.

We recall the following fact from potential theory, (see, e.g., \cite[Thm. 10]{XinweiYu2011}).

\begin{lemma}\label{lemma_pde}
			Let $\Omega \subset \mathbb{R}^{n}$ be open and bounded. Let $B_{1}=B\left(x_{0}, R\right)$, $B_{2}=B\left(x_{0}, 2R\right) \subset \bar{B_2} \subset \Omega$ be concentric balls.
Suppose that $u$ solves $\Delta u=f$ in $\overline{B_{2}}$, and $f \in C^{0}(\Omega)$, then $u \in C^{1, \alpha}(B_1)$ for any $\alpha \in(0,1),$ and
			$$
			\|u\|_{C^{1, \alpha}\left(B_1\right)} \lesssim \|f\|_{L^\infty(B_2)}+\|u\|_{L^{2}(B_2)}.
			$$

			Here, $$\norm{u}_{C^{k,\alpha}(\Omega)}:=\sum_{0\leq \abs{\beta} \leq k} \norm{\partial^\beta u}_{L^\infty(\Omega)} + \sup_{\abs{\beta}=k} \sup_{\substack{x,y\in\Omega \\ x\neq y}} \frac{\abs{D^\beta u (x)-D^\beta u (y)}}{\abs{x-y}^\alpha}.$$
		\end{lemma}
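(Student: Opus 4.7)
The strategy is the classical potential-theoretic decomposition: isolate the inhomogeneity into a Newtonian (logarithmic) potential and let the remaining harmonic part absorb the $L^2$ data. Since the paper works in $\mathbb{C} \cong \mathbb{R}^2$, I would set
$$v(x) := \frac{1}{2\pi}\int_{B_2} \log|x-y|\, f(y)\,\dA(y), \qquad h := u - v.$$
Then $\Delta v = f$ in $B_2$, so $h$ is harmonic on $B_2$, and the game is to estimate the two pieces separately with the sharp dependence on $\|f\|_{L^\infty(B_2)}$ and $\|u\|_{L^2(B_2)}$ respectively.

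\emph{Step 1: $C^{1,\alpha}$ bound for $v$.} I would establish $\|v\|_{C^{1,\alpha}(B_2)} \lesssim \|f\|_{L^\infty(B_2)}$ for every $\alpha\in(0,1)$. The $L^\infty$ control on $v$ and $\nabla v$ is immediate from the local integrability of $\log|x-y|$ and $|x-y|^{-1}$. For the Hölder seminorm of $\nabla v$, I would use the standard domain-splitting trick: given $x_1,x_2$ with $\delta := |x_1-x_2|$ small, write
$$\nabla v(x_1)-\nabla v(x_2) = \frac{1}{2\pi}\int_{B_2}\left( \frac{x_1-y}{|x_1-y|^2}-\frac{x_2-y}{|x_2-y|^2}\right) f(y)\,\dA(y),$$
then split over $B(x_1,2\delta)$ (estimate each term individually using integrability of $|x-y|^{-1}$, gaining a factor $\delta$) and its complement (apply the mean value theorem against the $O(|x-y|^{-2})$ derivative, gaining $\delta|\log\delta|$). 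This yields $|\nabla v(x_1)-\nabla v(x_2)|\lesssim \delta|\log\delta|\,\|f\|_{L^\infty}$, which is dominated by $\delta^{\alpha}\|f\|_{L^\infty}$ for any $\alpha<1$.

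\emph{Step 2: interior estimate for $h$.} Since $h$ is harmonic on $B_2$, standard mean-value/Cauchy estimates give $\|\partial^{\beta} h\|_{L^\infty(B_1)}\lesssim \|h\|_{L^2(B_2)}$ for every multi-index $\beta$, with constants depending only on $R$ and $|\beta|$. Applying this for $|\beta|\le 2$ and invoking the mean value theorem yields $\|h\|_{C^{1,\alpha}(B_1)}\lesssim \|h\|_{L^2(B_2)}$. Combining with Step 1,
$$\|h\|_{L^2(B_2)}\le \|u\|_{L^2(B_2)}+\|v\|_{L^2(B_2)}\lesssim \|u\|_{L^2(B_2)}+\|f\|_{L^\infty(B_2)},$$
and the triangle inequality $\|u\|_{C^{1,\alpha}(B_1)}\le \|v\|_{C^{1,\alpha}(B_1)}+\|h\|_{C^{1,\alpha}(B_1)}$ closes the proof. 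The only subtle point is the logarithmic factor $|\log\delta|$ appearing in Step 1; it is harmless precisely because the statement allows any $\alpha<1$, which is consistent with the well-known failure of $C^2$-regularity for Newtonian potentials of merely bounded (or even continuous) data. Everything in Step 2 is a standard consequence of harmonicity.
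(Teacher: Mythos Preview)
Your proof is correct, but the paper does not actually prove this lemma: it is stated as a recalled fact from potential theory, with a reference to external lecture notes \cite{XinweiYu2011}, and no argument is given in the paper itself. Your Newtonian-potential decomposition $u=v+h$, with $v$ the logarithmic potential of $f$ and $h$ harmonic, is precisely the classical route to this interior $C^{1,\alpha}$ estimate, and each step (the $\delta|\log\delta|$ modulus of continuity for $\nabla v$ via domain splitting, the interior derivative estimates for $h$) is standard and correctly executed. One small point: you specialize to $n=2$, which is the only case the paper subsequently uses, whereas the lemma is stated for general $\mathbb{R}^n$; the argument adapts verbatim upon replacing $\log|x-y|$ by the Newtonian kernel $c_n|x-y|^{2-n}$ for $n\ge 3$.
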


		As an application, we have the following bounds for Riesz decomposition.

\begin{lemma}\label{lemma:bounds_potentials}
Let $(\Lambda, m_\Lambda)$ and $\phi$ be compatible, and let $\varepsilon > 0$.
For each $\lambda \in \Lambda$, let
	\begin{equation*}
\phi(z)=h_\lambda(z)+G[\Delta \phi](z), \qquad z \in B(\lambda,\varepsilon),
\end{equation*}
be the Riesz decomposition of $\phi$ on $B(\lambda,\varepsilon)$, where $h_\lambda$ is harmonic and $G[\Delta \phi]$ is the logarithmic potential of $\Delta \phi$ on $B(\lambda,\varepsilon)$.  Write $h_\lambda = 2 \operatorname{Re} H_\lambda$, where $H_\lambda$ is holomorphic, and set $G_{\lambda}=H_\lambda-H_\lambda(\lambda)$.

Then there exists $C_\varepsilon > 0$, such that
for $z\in B(\lambda,\varepsilon)$,
\begin{align}
\left| G[\Delta\phi](z) \right|&\leq C_\varepsilon, \\
\left| \phi(z)-\phi(\lambda)-2\operatorname{Re} G_{\lambda}(z)\right|&\leq C_\varepsilon,
\end{align}
and if $z\in B(\lambda,\varepsilon/{2^{n_\Lambda}})$ and $1\leq k\leq {\nLambda}$,
\begin{align}\label{eq_fff}
|\partial^{k} G[\Delta\phi](z)| &\leq C_\varepsilon,
\\\label{eq_ffff}
\left| \partial^k(G_{\lambda} - \phi)(z)        \right| &\leq C_\varepsilon.
\end{align}
The constant $C_\varepsilon$ depends on $\varepsilon$ but not on $\lambda$.
\end{lemma}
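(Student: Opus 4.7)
The plan is to dispatch the four estimates in order: the first is an elementary estimate of a logarithmic potential, the second follows from the first by subtracting two Riesz decompositions, the third either invokes the compatibility hypothesis \eqref{eq:bound_potential} or proves it by an iterated integration-by-parts argument, and the fourth falls out of the third by a one-line identity.

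First, I would bound $|G[\Delta\phi](z)|$ by brute force. Since $\Delta\phi$ is bounded above by some constant $M$ and $B(\lambda,\varepsilon)\subset B(z,2\varepsilon)$ for every $z\in B(\lambda,\varepsilon)$, a translation of the integration variable gives
\[
|G[\Delta\phi](z)|\leq \frac{2M}{\pi}\int_{B(0,2\varepsilon)}\bigl|\log|u|\bigr|\,dA(u),
\]
which is a finite constant depending only on $\varepsilon$ and $M$. The second bound then reduces algebraically to the first: writing the Riesz decomposition at both $z$ and $\lambda$, subtracting, and using $h_\lambda=2\operatorname{Re}H_\lambda$ together with $G_\lambda=H_\lambda-H_\lambda(\lambda)$, yields
\[
\phi(z)-\phi(\lambda)-2\operatorname{Re}G_\lambda(z)=G[\Delta\phi](z)-G[\Delta\phi](\lambda),
\]
so the required estimate is immediate.

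For the derivative estimate \eqref{eq_fff}, the cleanest option is to invoke \eqref{eq:bound_potential} directly with $\zeta=\lambda$ and $r=\varepsilon$, choosing parameters so that $\varepsilon/2^{n_\Lambda}\leq\varepsilon_\varepsilon$. The hard part will be to justify \eqref{eq:bound_potential} itself from the raw smoothness hypotheses on $\phi$. For this I would differentiate under the integral sign, using $\partial_z^j\log|w-z|=-(j-1)!\,(w-z)^{-j}/2$. For $j=1$ the kernel $|w-z|^{-1}$ is locally integrable on $\mathbb{C}$ and the estimate is direct. For $j\geq 2$ the kernel is no longer integrable and integration by parts in the $w$-variable is required: writing $(w-z)^{-j}=-(j-1)^{-1}\partial_w(w-z)^{-(j-1)}$ and applying the two-dimensional identity $\int_B g\,\partial_w f\,dA=-\int_B f\,\partial_w g\,dA+\tfrac{i}{2}\int_{\partial B}fg\,d\bar w$ iteratively shifts all singular derivatives onto $\Delta\phi$. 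After $j-1$ steps one is left with bulk integrals of the weakly singular kernel $(w-z)^{-1}$ paired against $\partial_w^{j-1}\Delta\phi$, which are absolutely convergent thanks to the hypothesis $\partial_w^{j-1}\Delta\phi\in L^\infty$ (valid for $j-1\leq n_\Lambda-1$), plus smooth boundary integrals on $\partial B(\lambda,\varepsilon)$. Restricting to $z\in B(\lambda,\varepsilon/2^{n_\Lambda})$ keeps $z$ at distance at least $\varepsilon(1-2^{-n_\Lambda})\geq \varepsilon/2$ from $\partial B(\lambda,\varepsilon)$, so these boundary integrals and all their $z$-derivatives are uniformly controlled; the specific factor $2^{n_\Lambda}$ also matches what would arise in an alternative iterative application of Lemma~\ref{lemma_pde} to $\Delta G[\Delta\phi]=\Delta\phi$.

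Finally, estimate \eqref{eq_ffff} falls out of \eqref{eq_fff} for free. Differentiating the Riesz decomposition in the form $\phi=H_\lambda+\overline{H_\lambda}+G[\Delta\phi]$ and using that $\overline{H_\lambda}$ is anti-holomorphic (so $\partial\overline{H_\lambda}=0$) together with the constancy of $H_\lambda(\lambda)$ in $z$, one gets $\partial^k(G_\lambda-\phi)=-\partial^k G[\Delta\phi]$ for all $k\geq1$, which is precisely what \eqref{eq_fff} controls.
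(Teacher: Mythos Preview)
Your proposal is correct. Parts 1, 2, and 4 match the paper's argument essentially line by line: the paper derives the same identity $\phi(z)-\phi(\lambda)-2\operatorname{Re}G_\lambda(z)=G[\Delta\phi](z)-G[\Delta\phi](\lambda)$ for the second bound, and the same identity $\partial^k(G_\lambda-\phi)=-\partial^k G[\Delta\phi]$ for the fourth.

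For the derivative bound \eqref{eq_fff}, the paper actually takes what you call the ``alternative'' route: it treats $k=1$ directly via the Newtonian-potential estimate (citing Gilbarg--Trudinger), and for $k\geq 2$ it iterates Lemma~\ref{lemma_pde} applied to $\Delta\bigl(\partial^{k-1}G[\Delta\phi]\bigr)=\partial^{k-1}\Delta\phi$, halving the ball radius at each step --- which is precisely where the factor $2^{n_\Lambda}$ originates. Your primary integration-by-parts approach is a legitimate and more self-contained alternative that bypasses elliptic interior estimates; just note that the starting identity $\partial_z^jG[\Delta\phi]=c_j\int(w-z)^{-j}\Delta\phi\,dA$ is only formal for $j\geq 2$, so the clean implementation is to begin from the convergent $j=1$ formula, use $\partial_z=-\partial_w$ on the kernel, integrate by parts, and verify that the inner boundary contribution on $\partial B(z,\delta)$ vanishes as $\delta\to 0$ (it does, by the continuity of $\Delta\phi$ guaranteed by $\phi\in C^{n_\Lambda+1}$ when $n_\Lambda\geq 2$). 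The paper's route is tidier bookkeeping; yours is more explicit. Also note that \eqref{eq:bound_potential} is asserted in the paper but not proved independently of this lemma, so invoking it ``directly'' is not really an option --- the present lemma is its justification.
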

\begin{proof}
Let $z\in B(\lambda, \varepsilon)$. We first note that
	\begin{align*}\big|G[\Delta u](z)\big| &\leq \frac{2}{\pi} \int_{B(\lambda,\varepsilon)}\big|\log|w-z|\big|\ \big|\Delta u(w)\big|\, \dA(w) \\ &\leq \frac{2}{\pi} \sup_{w\in\bC} |\Delta\phi(w)|  \int_{B(0,2\varepsilon)}\big|\log|w|\big|\ \, \dA(w) =: C_\varepsilon^{(1)}.
	\end{align*}
Note also that
	\begin{equation}\label{eq:def_g_lambda}
\phi(z)-\phi(\lambda)-2\operatorname{Re} G_{\lambda}(z) = G[\Delta \phi](z)-G[\Delta \phi](\lambda).
	\end{equation}
Consequently,
	\begin{equation}
	\left|\phi(z)-\phi(\lambda)-2\operatorname{Re} G_{\lambda}(z)\right|=\left|G[\Delta \phi](z)-G[\Delta \phi](\lambda)\right|< 2  C_\varepsilon^{(1)}.
	\end{equation}

We now show similar bounds for the derivatives of the logarithmic potential of $\Delta\phi$. Suppose $n_\Lambda\geq 1$.
As shown in \cite[Lemma 4.1]{MR1814364}, since $\Delta\phi$ is bounded and integrable in $B(\lambda,\varepsilon/2)$, $G[\Delta\phi]\in C^1(\lambda,\varepsilon/2)$. Moreover, as consequence of \cite[Eq. 4.8]{MR1814364} for any $z\in B(\lambda,\varepsilon/2)$,
\begin{equation}\label{eq:first_derivative_potential_bound}\left|\partial G[\Delta\phi](z) \right| \leq \frac{2}{\pi} \sup_{w\in\bC}|{\Delta\phi}(w)| \int_{B(0,2\varepsilon)}     \left|{w} \right|^{-1} \dA(w) =C_\varepsilon^{(2)}.\end{equation}

Suppose now that $n_\Lambda \geq 2$. Recall that in this case, $\phi$ is assumed to be $C^{n_{\Lambda}+1}$, thus the Laplacian is $C^{n_{\Lambda}-1}$.
By Theorem \ref{thm:Riesz_decomp}, \begin{equation}\label{eq:g_delta} \Delta\left( G[\Delta\phi]\right)=\Delta\phi,\end{equation}
on $B(\lambda,\varepsilon)$. Taking derivatives in \eqref{eq:g_delta} we obtain
$\Delta \partial G[\Delta\phi]=\partial \Delta\phi$. On account of Lemma \ref{lemma_pde}, since $\partial\Delta\phi\in C(B(\lambda,\varepsilon))$, we obtain that $\partial G[\Delta\phi] \in C^1(B(\lambda,\varepsilon/4))$ and that there exists a constant $\tilde{C_2}>0$ for which
$$\norm{\partial^2 G[\Delta\phi]}_{L^\infty (B(\lambda,\varepsilon/4))} \leq \tilde{C_2}
\left(\norm{\partial  \Delta\phi}_{L^\infty(   B(\lambda,\varepsilon/2) )}+\norm{\partial G[\Delta\phi]}_{L^{2}(B(\lambda,\varepsilon/2))}\right),
$$ where the right-hand of the last inequality is uniformly bounded as a consequence of  \eqref{eq:first_derivative_potential_bound} and the assumption that $\partial\Delta\phi$ is uniformly bounded.
Iterating this argument, we obtain that
$$\norm{\partial^k G[\Delta\phi]}_{L^\infty (B(\lambda,\varepsilon/2^k))} \leq \tilde{C_k}
\left(\norm{\partial^{k-1}  \Delta\phi}_{L^\infty(   B(\lambda,\varepsilon/2^{k-1}) )}+\norm{\partial^{k-1} G[\Delta\phi]}_{L^{2}(B(\lambda,\varepsilon/2^{k-1}))}\right),
$$
where the right-hand is uniformly bounded. This proves \eqref{eq_fff}.

Finally, since $G$ is holomorphic, repeated differentiation of \eqref{eq:def_g_lambda} yields,
\begin{equation}\label{eq:relationship_derivatives}
\partial^k G_{\lambda} (z) - \partial^k\phi (z) =
\partial^k [ 2 \operatorname{Re} G_{\lambda}] (z) - \partial^k\phi (z)
= - \partial^k G[\Delta \phi] (z).
\end{equation}
Thus \eqref{eq_fff}, gives \eqref{eq_ffff}.
\end{proof}

The following lemma helps profit from Riesz decomposition in estimates involving derivatives.

\begin{lemma}\label{lemma:bound_partial_f_eh}
Let $(\Lambda, m_\Lambda)$ and $\phi$ be compatible, $D\subseteq\bC$ a domain, $K \subseteq D$ open and relatively compact, and $f,H:D\longrightarrow \bC$ analytic functions. Let $G[\Delta\phi]$ be as in Theorem \ref{thm:Riesz_decomp}. Then there exist a constant $C=C_{\phi, K}$, such that for every $\lambda \in \Lambda \cap K$,
and $0 \leq j \leq n_\Lambda$,
the following estimates hold:
	\begin{enumerate}\item[(i)]	$\displaystyle	|\partial^j(fe^{-H})(\lambda) |^2 \leq C \sum_{k=0}^j |\partial^k ( fe^{-H-G[\Delta\phi]}) (\lambda)  |^2  ,$

		\item[(ii)] $\displaystyle	|\partial^j(fe^{-H-G[\Delta\phi]})(\lambda) |^2 \leq C \sum_{k=0}^j |\partial^k ( fe^{-H}) (\lambda)  |^2  .$
	\end{enumerate}
\end{lemma}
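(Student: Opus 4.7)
The plan is to reduce both inequalities to a single algebraic manipulation combined with uniform bounds on the derivatives of $e^{\pm G[\Delta\phi]}$ at $\lambda$. The observation is that
$$f e^{-H-G[\Delta\phi]} = (f e^{-H}) \cdot e^{-G[\Delta\phi]} \qquad \text{and} \qquad f e^{-H} = (f e^{-H-G[\Delta\phi]}) \cdot e^{+G[\Delta\phi]},$$
so I would apply the Leibniz rule to $\partial^{j}$ of each product, take absolute value squared, and close with the Cauchy--Schwarz inequality. Both (i) and (ii) would then follow as soon as one shows that $|\partial^{l} e^{\pm G[\Delta\phi]}(\lambda)|$ is uniformly bounded for $\lambda \in K \cap \Lambda$ and $0 \le l \le n_\Lambda$.

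By the Fa\`{a} di Bruno formula (Section \ref{sec:Bell}), $\partial^{l} e^{\pm G[\Delta\phi]}$ equals $e^{\pm G[\Delta\phi]}$ times a polynomial in the derivatives $\partial G[\Delta\phi], \dots, \partial^{l} G[\Delta\phi]$; so it would suffice to bound $|G[\Delta\phi](\lambda)|$ and $|\partial^{k} G[\Delta\phi](\lambda)|$ for $1 \le k \le n_\Lambda$. These bounds are essentially the content of Lemma \ref{lemma:bounds_potentials}, but with a technical twist: there $G[\Delta\phi]$ is the logarithmic potential taken over a small ball $B(\lambda,\varepsilon)$ centered at $\lambda$, whereas in the present lemma it is the potential over the fixed domain $K$. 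Since $K$ is relatively compact in $D$, I would fix $\varepsilon>0$ with $B(\lambda,\varepsilon)\subseteq D$ for every $\lambda\in K\cap\Lambda$ and split
$$G_{K}[\Delta\phi](z) = G_{B(\lambda,\varepsilon)}[\Delta\phi](z) + \frac{2}{\pi}\int_{K\setminus B(\lambda,\varepsilon)} \log|w-z|\, \Delta\phi(w)\, dA(w).$$
The first summand is controlled by Lemma \ref{lemma:bounds_potentials}; the second summand is a smooth function on $B(\lambda,\varepsilon/2)$ whose derivatives at $\lambda$ are bounded by a multiple of $\varepsilon^{-l}$ uniformly in $\lambda$, thanks to the boundedness of $\Delta\phi$ and the finiteness of $|K|$.

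The hard part will be precisely this last reconciliation step, since the compatibility assumptions are tailored to the localized (ball) potentials of Lemma \ref{lemma:bounds_potentials} rather than to the $K$-potential used here. Once the uniform bound on $|\partial^{l} G[\Delta\phi](\lambda)|$ is in place, the two inequalities (i) and (ii) drop out from the same combinatorial argument, differing only in the sign in front of $G[\Delta\phi]$.
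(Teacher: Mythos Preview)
Your approach is exactly the paper's: write $fe^{-H}=(fe^{-H-G[\Delta\phi]})\cdot e^{G[\Delta\phi]}$ (and vice versa), apply Leibniz, and bound $\partial^{l}e^{\pm G[\Delta\phi]}(\lambda)$ via Lemma~\ref{lemma:bounds_potentials}. The paper's proof is a two-line version of this and simply invokes Lemma~\ref{lemma:bounds_potentials} without your reconciliation step between the $K$-potential and the ball potential; in every application of the lemma in the paper $K$ is in fact a ball centered at $\lambda$, so the shortcut is harmless there. Your extra care is not wasted, but note that the displayed splitting
\[
G_{K}[\Delta\phi]=G_{B(\lambda,\varepsilon)}[\Delta\phi]+\frac{2}{\pi}\int_{K\setminus B(\lambda,\varepsilon)}\log|w-z|\,\Delta\phi(w)\,dA(w)
\]
tacitly assumes $B(\lambda,\varepsilon)\subseteq K$, not merely $\subseteq D$; to make it literally correct either replace the first term by the integral over $K\cap B(\lambda,\varepsilon)$ (and rerun the estimates of Lemma~\ref{lemma:bounds_potentials} on that region), or add and subtract the full ball potential and observe that the correction over $B(\lambda,\varepsilon)\setminus K$ is again smooth near $\lambda$ since $\lambda\in K$ is an interior point.
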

\begin{proof}
By Leibniz rule:
	\begin{align*}
	\partial^j(fe^{-H})=\partial^j(fe^{-H-G[\Delta\phi]+G[\Delta\phi]}) &= \sum_{k=0}^{j} \binom{j}{k}  \partial^{k} (f e^{-H-G[\Delta\phi]}) \partial^{j-k}(e^{G[\Delta\phi]}).
	\end{align*}
	By Lemma \ref{lemma:bounds_potentials}, $\partial^{j-k}(e^{G[\Delta\phi]})$ is bounded for $0\leq k\leq j$. This yields (i); (ii) follows similarly.
\end{proof}

Finally, we note that Cauchy bounds extend to weighted derivatives.

\begin{lemma}\label{lemma:compare_dbarstar_normf}
Let $\left(\Lambda, m_{\Lambda}\right)$ be a separated set with multiplicities that is compatible with $\phi$.
Then for each $\lambda\in\Lambda$ and $j \leq {\nLambda}$,
	$$\left|\dbarstar{f}{j}(\lambda)\right|^2 e^{-\phi(\lambda)}\lesssim \int_{B(\lambda,1)}  |f(w)|^2 e^{-\phi(w)}\ dA(z).$$
\end{lemma}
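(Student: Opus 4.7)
The plan is to unpack the definition of $\dbarstar{f}{j}$, apply the Riesz decomposition of $\phi$ locally at each $\lambda$, and reduce everything to a standard Cauchy estimate for a holomorphic auxiliary function. Iterating the definition $\dbarstarr{f}{1}{\phi} = -e^{\phi}\partial(e^{-\phi} f)$ yields $\dbarstar{f}{j} = (-1)^{j} e^{\phi}\partial^{j}(e^{-\phi} f)$, so
$$
\bigl|\dbarstar{f}{j}(\lambda)\bigr|^{2} e^{-\phi(\lambda)} = e^{\phi(\lambda)}\bigl|\partial^{j}(fe^{-\phi})(\lambda)\bigr|^{2},
$$
and it suffices to control the right-hand side.

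I would then apply Lemma \ref{lemma:bounds_potentials} with $\varepsilon = 1$, writing $\phi = h_{\lambda} + G[\Delta\phi]$ on $B(\lambda,1)$ and $h_{\lambda} = H_{\lambda} + \overline{H_{\lambda}}$ with $H_{\lambda}$ holomorphic. Factoring $fe^{-\phi} = (fe^{-H_{\lambda} - G[\Delta\phi]})\, e^{-\overline{H_{\lambda}}}$ and using that $\partial$ annihilates the antiholomorphic factor yields
$$
\partial^{j}(fe^{-\phi})(\lambda) = e^{-\overline{H_{\lambda}(\lambda)}}\,\partial^{j}\bigl(fe^{-H_{\lambda} - G[\Delta\phi]}\bigr)(\lambda).
$$
Since $|e^{-\overline{H_{\lambda}(\lambda)}}|^{2} = e^{-\phi(\lambda) + G[\Delta\phi](\lambda)}$ and $|G[\Delta\phi](\lambda)| \lesssim 1$ by Lemma \ref{lemma:bounds_potentials}, this prefactor neatly cancels the leading $e^{\phi(\lambda)}$, reducing the problem to bounding $|\partial^{j}(fe^{-H_{\lambda} - G[\Delta\phi]})(\lambda)|^{2}$ by a uniform constant times $\int_{B(\lambda,1)}|f|^{2}e^{-\phi}\,dA$.

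Next, Lemma \ref{lemma:bound_partial_f_eh}(ii) with $H = H_{\lambda}$ gives
$$
\bigl|\partial^{j}(fe^{-H_{\lambda} - G[\Delta\phi]})(\lambda)\bigr|^{2} \lesssim \sum_{k=0}^{j} \bigl|\partial^{k}(fe^{-H_{\lambda}})(\lambda)\bigr|^{2},
$$
and, since $g := fe^{-H_{\lambda}}$ is holomorphic on $B(\lambda,1)$, a power-series expansion at $\lambda$ combined with integration in polar coordinates provides the standard Cauchy-type bound $|\partial^{k} g(\lambda)|^{2} \lesssim \int_{B(\lambda,1/2)} |g|^{2} \, dA$ for $0 \leq k \leq n_{\Lambda}$. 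Rewriting $|g|^{2} = |f|^{2} e^{-h_{\lambda}}$ and invoking once more the bound $e^{-h_{\lambda}(w)} = e^{-\phi(w)+G[\Delta\phi](w)} \lesssim e^{-\phi(w)}$ on $B(\lambda,1)$ from Lemma \ref{lemma:bounds_potentials}, the right-hand side is controlled by $\int_{B(\lambda,1)} |f|^{2} e^{-\phi}\, dA$, closing the estimate.

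I expect no serious obstacle: the substantive technical content—uniform bounds on the derivatives of $G[\Delta\phi]$ and the equivalence of $e^{-h_{\lambda}}$ and $e^{-\phi}$ near $\lambda$—has already been absorbed into the preceding lemmas. The only subtlety is bookkeeping, namely keeping the constants arising in the various applications of Lemma \ref{lemma:bounds_potentials} uniform in $\lambda$, which follows directly from the compatibility assumption. Note that the separation hypothesis on $\Lambda$ plays no role in this pointwise estimate.
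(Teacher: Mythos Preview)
Your proposal is correct and follows essentially the same argument as the paper: Riesz-decompose $\phi$ on $B(\lambda,1)$, peel off the antiholomorphic factor $e^{-\overline{H_\lambda}}$, apply Lemma~\ref{lemma:bound_partial_f_eh}(ii) to pass from $fe^{-H_\lambda-G[\Delta\phi]}$ to $fe^{-H_\lambda}$, and finish with a Cauchy estimate on the holomorphic function $fe^{-H_\lambda}$. Your observation that the separation hypothesis is not actually used in this pointwise bound is also accurate.
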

\begin{proof}
	For each $\lambda\in\Lambda$ we write $\phi(z)=h_\lambda(z)+G[\Delta\phi](z)$ as in Theorem \ref{thm:Riesz_decomp} where the decomposition is taken on the set $B(\lambda,1)$. We apply Lemma \ref{lemma:bound_partial_f_eh}, together with Cauchy's bound to obtain
	\begin{align*}
	\left|\dbarstar{f}{j}(\lambda)\right|^2 e^{-\phi(\lambda)}&=\left|\partial^j\left(fe^{-\phi}\right)(\lambda)e^{\phi(\lambda)}\right|^2 e^{-\phi(\lambda)} =\left|\partial^j\left(fe^{-H_\lambda-\overline{H_\lambda}-G[\Delta\phi]}\right)(\lambda)\right|^2 e^{\phi(\lambda)}\\
	&=\left|\partial^j\left(fe^{-H_\lambda-G[\Delta\phi]}\right)(\lambda)\right|^2 \left| e^{-\overline{H_\lambda}} \right|^2 e^{\phi(\lambda)} =\left|\partial^j\left(fe^{-H_\lambda-G[\Delta\phi]}\right)(\lambda)\right|^2 e^{\phi(\lambda)-h_\lambda(\lambda)}\\
	&\lesssim \sum_{k=0}^j \left|\partial^k ( fe^{-H_\lambda}) (\lambda)  \right|^2 \lesssim \int_{B(\lambda,1)}  |f(w)|^2 e^{-\phi(w)}\ dA(z).
	\end{align*}
\end{proof}

\section{Sufficient conditions for interpolation and sampling}\label{sec_suf}
Throughout this section we assume that $(\Lambda, m_\Lambda)$ is a separated set with multiplicities that is compatible with $\phi$.

\subsection{Interpolation}\label{sec:suf_is}

We start by showing that interpolation can be solved locally.

\begin{lemma}\label{lemma:local_interpolant}
	Let $\varepsilon>0$, $\lambda \in \Lambda$ and  $\{c_l:\, l=0, \ldots, m_\Lambda(\lambda)-1 \} \subseteq \bC$. Then there exists $f_\lambda: B(\lambda,\varepsilon) \longrightarrow \bC$ analytic such that
	\begin{align*}
	\dbarstar{f_\lambda}{j}(\lambda) &= c_j,
\qquad 0 \leq j \leq m_\Lambda(\lambda)-1,
	\\
	\intertext{and}
	\left| f_\lambda(z)\right|^2 e^{-\phi(z)} &\leq  C_{\varepsilon} \sum_{l=0}^{m_\Lambda(\lambda)-1} |c_l|^2 e^{-\phi(\lambda)},
	\end{align*}
for all $z\in B(\lambda,\varepsilon)$.
\end{lemma}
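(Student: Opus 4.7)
The plan is to seek an interpolant of the form $f_\lambda(z) = e^{H_\lambda(z)} p(z)$, where $H_\lambda$ is the holomorphic function arising from the Riesz decomposition of $\phi$ on $B(\lambda,\varepsilon)$ provided by Lemma \ref{lemma:bounds_potentials}, and $p$ is a polynomial in $z-\lambda$ of degree at most $m_\Lambda(\lambda)-1$ whose coefficients are to be determined. The multiplication by $e^{H_\lambda}$ keeps $f_\lambda$ holomorphic while neutralizing the holomorphic part of the weight, reducing the interpolation problem to a finite-dimensional linear algebra problem in the coefficients of $p$.

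Iterating the definition of $\dbarstar{\cdot}{1}$ yields $\dbarstar{f}{j} = (-1)^j e^{\phi}\partial^j(e^{-\phi}f)$. Writing $\phi = H_\lambda + \overline{H_\lambda} + G[\Delta\phi]$ on $B(\lambda,\varepsilon)$ and using that $\partial \overline{H_\lambda} = 0$, one obtains
\[
\dbarstar{f_\lambda}{j}(z) = (-1)^j e^{H_\lambda(z) + G[\Delta\phi](z)}\, \partial^j\!\big(e^{-G[\Delta\phi]}\, p\big)(z).
\]
Writing $p(z) = \sum_{k=0}^{m_\Lambda(\lambda)-1} a_k (z-\lambda)^k/k!$ and expanding via Leibniz's rule, the interpolation conditions $\dbarstar{f_\lambda}{j}(\lambda) = c_j$ become a triangular linear system in the unknowns $a_0,\ldots,a_{m_\Lambda(\lambda)-1}$, with diagonal entries $(-1)^j e^{H_\lambda(\lambda)} \neq 0$ and off-diagonal entries involving $\partial^{j-k}(e^{-G[\Delta\phi]})(\lambda)$, which by Lemma \ref{lemma:bounds_potentials} are bounded uniformly in $\lambda$.

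Back-substitution gives $|a_k|^2 \lesssim e^{-2\operatorname{Re} H_\lambda(\lambda)} \sum_{l} |c_l|^2 = e^{-h_\lambda(\lambda)} \sum_l |c_l|^2 \lesssim e^{-\phi(\lambda)} \sum_l |c_l|^2$, where the last step uses once more that $|G[\Delta\phi](\lambda)| \leq C_\varepsilon$. For the pointwise estimate, on $B(\lambda,\varepsilon)$ one has
\[
|f_\lambda(z)|^2 e^{-\phi(z)} = e^{2\operatorname{Re} H_\lambda(z) - \phi(z)}|p(z)|^2 = e^{-G[\Delta\phi](z)}|p(z)|^2 \leq e^{C_\varepsilon}|p(z)|^2,
\]
and $|p(z)|^2 \leq C_\varepsilon \sum_k |a_k|^2$ by the triangle inequality. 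Combining these two bounds yields the claim.

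The main obstacle is not the algebra above, which is routine, but rather ensuring that all constants can be taken independently of $\lambda$. This is precisely what the compatibility assumption between $\phi$ and $(\Lambda,m_\Lambda)$ buys us through Lemma \ref{lemma:bounds_potentials}: uniform control of $G[\Delta\phi]$ and its derivatives of order up to $n_\Lambda$ at all points of $\Lambda$.
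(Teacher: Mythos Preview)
Your proof is correct and follows essentially the same route as the paper's: the paper also takes $f_\lambda = p\,e^{G_\lambda}$ with $G_\lambda = H_\lambda - H_\lambda(\lambda)$, expands $\dbarstar{f_\lambda}{j}(\lambda)$ via Leibniz (using Fa\`a di Bruno / Bell polynomials for the exponential factor), obtains a triangular system, and bounds everything through Lemma~\ref{lemma:bounds_potentials}. The only cosmetic differences are that the paper normalizes $H_\lambda$ so that $G_\lambda(\lambda)=0$ (which just shifts the factor $e^{H_\lambda(\lambda)}$ from the system into the polynomial), and that it writes out the solution of the recursion explicitly in terms of Bell polynomials rather than appealing to back-substitution; neither changes the substance of the argument.
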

\begin{proof}
	We consider $G_\lambda$ as in the Lemma \ref{lemma:bounds_potentials} on the set $B(\lambda,\varepsilon)$. We define
	\begin{equation}\label{eq:local_interpolant_1}f_{\lambda}(z) := p_\lambda(z) e^{G_{\lambda}(z)},\end{equation}
	where \begin{equation}\label{eq:local_interpolant_2}p_\lambda(z) := \sum_{j=0}^{m_{\Lambda}(\lambda)-1} \frac{k_j}{j!} (z-\lambda)^j,\end{equation} and $k_j$ are real coefficients to be chosen so that
	\begin{equation}\label{eq:local_interpolant_3}
	\dbarstar{f}{j}_{\lambda}\left(\lambda\right) = c_{j},
	\end{equation} for all $j\in\{0,\dots,m_\Lambda(\lambda)-1\}$.

	To obtain an explicit formula of $k_j$ we proceed as follows,
	\begin{align}
	\dbarstar{f_\lambda}{j}(w) &= (-1)^j e^{\phi(w)}\partial^j\left( f_\lambda e^{-\phi} \right)(w)  \nonumber
	= (-1)^j e^{\phi(w)}\partial^j\left( p_\lambda e^{G_{\lambda}-\phi} \right)(w) , \nonumber \\
	&= (-1)^j e^{\phi(w)}{\left[\sum_{l=0}^{j} \binom{j}{l} \partial^{\thinspace l} p_\lambda (w)
		\partial^{\thinspace j-l}\left(e^{G_{\lambda}-\phi}  \right)(w)            \right] } .
	\label{eq:k_j}
	\end{align}
The $n$-th derivative of two composite functions can be computed by means of \emph{Fa\`{a} di Bruno's} formula and the \emph{Bell polynomials} (see Equation \eqref{eq:exponential_derivative} and Section \ref{sec:Bell} for definitions and suitable references). More precisely,
	\begin{equation}\label{eq:derivative_egl}
	\partial^{k} e^{G_{\lambda}-\phi}(w)=e^{G_{\lambda}(w)-\phi(w)} B_k\bigg( \partial(G_{\lambda}-\phi)(w),\ \ldots \ ,\
	\partial^k (G_{\lambda}-\phi)(w)         \bigg),
	\end{equation}
	where $B_k$ is the $k-$th complete Bell polynomial. We write $B_{k}^{g}(w):=B_{k}\left(\partial g(w), \ldots, \partial^{k} g(w)\right)$ to shorten notation.

	Since $G_\lambda(\lambda)=0$, substituting \eqref{eq:derivative_egl} into \eqref{eq:k_j}, and evaluating at $w=\lambda$, it follows that
	\begin{equation*}
	\dbarstar{f_\lambda}{j}(\lambda) = (-1)^j\left( k_j + \sum_{l=0}^{j-1} \binom{j}{l}\ k_{l}\
		B_{j-l}^{G_{\lambda}-\phi}(\lambda)\right) .
	\end{equation*}
	Therefore, \eqref{eq:local_interpolant_3} is equivalent to:
	\begin{equation} \label{eq:interpolation_equivalent_condition} k_j = (-1)^j  c_{j} -  {\sum_{l=0}^{j-1} \binom{j}{l}\ k_{l}
	}\ B_{j-l}^{G_{\lambda}-\phi}(\lambda),\end{equation}
for all $j\in\{0,\dots,m_\Lambda(\lambda)-1\}$.
	Solving the recursion yields:
	\begin{equation}\label{eq:explicit_formula_interpolation_coeficients} k_j = {\sum_{l=0}^{j} (-1)^l \binom{j}{l}\   c_{l}
	}\ B_{j-l}^{\phi - G_{\lambda}}(\lambda),\end{equation}
for all $j\in\{0,\dots,m_\Lambda(\lambda)-1\}$; see Section \ref{sec:equation_coef} for a proof.

	\begin{remark}\label{rem:poly_bounded} By  Lemma \ref{lemma:bounds_potentials}, $\left|\partial^j(\phi-G_{\lambda})(\lambda) \right| \leq C_\varepsilon$ for each $j\in\{1,\ldots,m_\Lambda(\lambda)-1\}$. Since $B_{j}^{\phi-G_{\lambda}}(\lambda)$ is a polynomial evaluated at $\Big(\partial(\phi-G_{\lambda})(\lambda),\, \ldots,\, \partial^j(\phi-G_{\lambda})(\lambda)\Big)$, we have $\left|B_{j}^{\phi-G_{\lambda}}(\lambda)\right|\lesssim C_\varepsilon$.
	\end{remark}

	On account of the above remark we conclude that $$|k_j|^2 \lesssim C_\varepsilon {\sum_{l=0}^{j} \   |c_{l}|^2
	}.$$
By Lemma \ref{lemma:bounds_potentials}, we finally obtain
	\begin{align*}
	\left|f_{\lambda}(z)\right|^{2} e^{-\phi(z)}=\left|p_{\lambda}(z)\right|^{2} e^{2\operatorname{Re}{G_\lambda(z)}-\phi(z)} &\lesssim C_\varepsilon \left( {\sum_{l=0}^{m_{\Lambda}(\lambda)-1}
		\left|c_{l}\right|^{2}}\right) e^{-\phi(\lambda)}.
	\end{align*}
\end{proof}

For technical reasons we now study the interpolation problem on $\fock{\phi}$ with respect to
the operator $\bar\partial_{\tilde\phi}^{*}$ associated with a second weight $\tilde\phi$. Provided that both weights are sufficiently smooth, powers of both operators are formally related by
\begin{equation}\label{eq:relation_operators_phi_phi_average}\dbarstarr{f}{j}{\tilde\phi}(z)=\sum_{l=0}^{j} \binom{j}{l}\ \dbarstarr{f}{l}{\phi}(z)\ (-1)^{j-l} \ \partial^{j-l}\left(e^{\phi-\tilde\phi}\right)(z)\ e^{\tilde\phi(z)-\phi(z)},
\qquad f \in \fock{\phi}.
\end{equation}

\begin{prop}\label{prop:interpolation_sufficiency}  Let $\left(\Lambda, m_{\Lambda}\right)$ be a separated set with multiplicities that is compatible with $\phi$. Assume additionally that $\Delta\phi$ is continuous.
	Let $\chi_{r}=\frac{1}{\pi r^{2}} 1_{B(0, r)}$ and $\nu :=\sum_{\lambda \in \Lambda} m_\Lambda (\lambda) \cdot  \delta_{\lambda}$. Suppose that there exists $\delta>0$ and $r>0$ such that
	\begin{align}\label{eq_bbb}
	\pi \nu \star \chi_{r}(z)< \Delta \phi(z)-\delta,
	\end{align}
	for all $z\in\mathbb{C}$. Let $\tilde\phi: \mathbb{C} \to \mathbb{R}$ satisfy
	\begin{align}\label{eq_aaa}
	&\norm{\partial^{j} (\phi - \tilde\phi)}_\infty < \infty, \qquad 0 \leq j \leq n_\Lambda
	\\
	&\label{eq_aaa_bis}
	0 < \inf_{z \in \mathbb{C}} \Delta \tilde \phi(z) \leq
	\sup_{z \in \mathbb{C}} \Delta \tilde \phi(z) < \infty.
	\end{align}

	Then $\left(\Lambda, m_{\Lambda}\right)$ solves the following interpolation problem with respect to $\tilde\phi$: given
	$c\in\ell_\phi^2{\pairsetmult{\Lambda}}$ there exists a function $f \in \fock{\phi} $  such that $
\,\bar\partial_{\tilde\phi}^{*}\,	^{(j)} f(\lambda)=c_{(\lambda,j)}$, for all $\lambda\in\Lambda$ and  $j\in\{0,\dots,{m_{\Lambda}(\lambda)-1}\}$.

(Note that, by \eqref{eq_aaa} and \eqref{eq_aaa_bis}, $\ell_\phi^2{\pairsetmult{\Lambda}}=\ell_{\tilde\phi}^2{\pairsetmult{\Lambda}}$ and $\fock{\phi}=\fock{\tilde\phi}$,
while $\tilde\phi: \mathbb{C} \to \mathbb{R}$ is also compatible with $\left(\Lambda, m_{\Lambda}\right)$.
)
	\end{prop}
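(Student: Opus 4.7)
The plan is to follow the Berndtsson-Ortega paradigm: build a smooth quasi-interpolant from local pieces and correct it by solving a $\bar\partial$-equation via H\"ormander's $L^2$-estimate with a carefully chosen subharmonic weight. Since $\tilde\phi$ is also compatible with $(\Lambda,m_\Lambda)$, the construction is carried out with respect to $\tilde\phi$ and transferred back to $\fock{\phi}$ at the end. First I would invoke Lemma \ref{lemma:local_interpolant} with $\tilde\phi$ in place of $\phi$ on each disk $B(\lambda,\varepsilon)$, with $\varepsilon<\rho(\Lambda)/2$ so the disks are pairwise disjoint. This yields analytic $f_\lambda:B(\lambda,\varepsilon)\to\bC$ with $\dbarstarr{f_\lambda}{j}{\tilde\phi}(\lambda)=c_{(\lambda,j)}$ and $|f_\lambda(z)|^2 e^{-\tilde\phi(z)}\lesssim \sum_l |c_{(\lambda,l)}|^2 e^{-\tilde\phi(\lambda)}$ on $B(\lambda,\varepsilon)$. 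Taking smooth cutoffs $\chi_\lambda$ equal to $1$ on $B(\lambda,\varepsilon/2)$, supported in $B(\lambda,\varepsilon)$, with uniformly bounded derivatives, I set $F:=\sum_\lambda \chi_\lambda f_\lambda$. Disjointness gives $F\equiv f_\lambda$ on $B(\lambda,\varepsilon/2)$, so $\dbarstarr{F}{j}{\tilde\phi}(\lambda)=c_{(\lambda,j)}$, while $g:=\bar\partial F$ is supported in the disjoint annuli $A_\lambda:=B(\lambda,\varepsilon)\setminus\overline{B(\lambda,\varepsilon/2)}$.

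Writing $f:=F-u$, the desired interpolant is then produced by solving $\bar\partial u=g$ with side conditions $\dbarstarr{u}{j}{\tilde\phi}(\lambda)=0$ for $0\leq j\leq m_\Lambda(\lambda)-1$. Since $g$ vanishes near each $\lambda$, such a $u$ is holomorphic in a neighborhood of every $\lambda$, and by Lemma \ref{lemma:bound_partial_f_eh} the side conditions amount to $u$ having a zero of order $m_\Lambda(\lambda)$ at $\lambda$; this vanishing will be enforced automatically by $L^2$-integrability against a weight with logarithmic poles. I would therefore apply H\"ormander's $L^2$-estimate with an auxiliary weight $\psi:=\tilde\phi+2\Phi$, where $\Phi$ is a potential with $\Phi(z)=m_\Lambda(\lambda)\log|z-\lambda|+O(1)$ near each $\lambda$ and whose regular Laplacian away from the poles is a mollified version of $\tfrac{\pi}{2}\nu$, built from a convolution of $\nu$ with a logarithmic kernel of scale $r$. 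The density hypothesis $\pi\nu\star\chi_r<\Delta\phi-\delta$, together with the fact that $\Delta\tilde\phi$ and $\Delta\phi$ differ by a bounded function, then yields $\Delta\psi\geq\delta/2$ in the smooth sense on $\mathrm{supp}(g)$, which is exactly the positivity H\"ormander requires, and the theorem produces $u$ satisfying
\[
\int_{\bC} |u(z)|^2 e^{-\psi(z)}\,dA(z)\ \lesssim\ \int_{\bC} \frac{|g(z)|^2}{\Delta\psi(z)}\,e^{-\psi(z)}\,dA(z).
\]

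The right-hand side is finite because $g$ is supported in the annuli $A_\lambda$, disjoint from the poles of $\Phi$, where $e^{-\psi}\asymp e^{-\tilde\phi}$; combined with the pointwise bounds on $f_\lambda$ and the separation of $\Lambda$, this controls the integral by $\|c\|^2$. Finiteness of the left-hand side, together with the $\log$-poles of $\Phi$, forces $u$ to vanish to order $m_\Lambda(\lambda)$ at each $\lambda$, so $\dbarstarr{f}{j}{\tilde\phi}(\lambda)=c_{(\lambda,j)}$; since $\psi-\tilde\phi$ is globally bounded away from the poles and $\phi,\tilde\phi$ differ by a bounded function, we conclude $f\in\fock{\phi}$. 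The main obstacle will be this weight construction: one must design $\Phi$ so that its logarithmic singularities are strong enough to enforce the prescribed vanishing of $u$ while its smooth Laplacian away from the poles is close enough to $\pi\nu$ for the averaged density hypothesis to be applicable. The strict slack $\delta$ in \eqref{eq_bbb} is precisely what provides room for this tradeoff, and the construction closely mirrors that of \cite{berndtsson-ortega-95}.
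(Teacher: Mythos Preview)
Your overall strategy is correct and matches the paper's: local interpolants via Lemma~\ref{lemma:local_interpolant} applied to $\tilde\phi$, patched with cutoffs, then corrected by solving $\bar\partial u=g$ via H\"ormander's estimate against a weight with logarithmic poles of order $m_\Lambda(\lambda)$ at each $\lambda$. There is, however, a genuine gap in the choice of base weight for $\psi$.

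You set $\psi=\tilde\phi+2\Phi$ and claim that the density hypothesis $\pi\nu\star\chi_r<\Delta\phi-\delta$, ``together with the fact that $\Delta\tilde\phi$ and $\Delta\phi$ differ by a bounded function,'' yields $\Delta\psi\geq\delta/2$. But bounded is not the same as small: nothing in \eqref{eq_aaa} or \eqref{eq_aaa_bis} controls the \emph{size} of $\Delta\tilde\phi-\Delta\phi$, only that each is individually bounded. Concretely, if $\Delta\phi\equiv 10$, $\delta=1$, $\Delta\tilde\phi\equiv 5$, and $\pi\nu\star\chi_r(z_0)=8$ at some $z_0\notin\Lambda$ (so \eqref{eq_bbb} holds), then with $\Delta(2\Phi)=\pi\nu-\pi\nu\star\chi_r$ one gets $\Delta\psi(z_0)=5-8=-3<0$, so $\psi$ is not subharmonic and H\"ormander's estimate is unavailable. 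Note also that H\"ormander requires $\Delta\psi\geq 0$ globally as a measure, not merely on $\mathrm{supp}(g)$; the uniform lower bound on $\mathrm{supp}(g)$ is what makes the right-hand side finite, but it does not by itself license the estimate.

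The fix is exactly what the paper does: base the H\"ormander weight on $\phi$, not $\tilde\phi$. Set $\psi:=\phi+\pi v$ with $v=E\star(\nu-\nu\star\chi_r)$, so that $\Delta\psi=\Delta\phi+\pi\nu-\pi\nu\star\chi_r\geq\pi\nu+\delta\geq\delta$ directly from \eqref{eq_bbb}. Since $\phi-\tilde\phi$ is bounded by \eqref{eq_aaa} with $j=0$, the pointwise bounds on the local interpolants $f_\lambda$ (built with respect to $\tilde\phi$) transfer immediately to the weights $e^{-\phi}$ and $e^{-\psi}$, and the rest of your argument goes through. The moral is that $\tilde\phi$ enters only through the operator $\bar\partial^*_{\tilde\phi}$ and the local interpolants, while the H\"ormander weight must be built on the $\phi$ appearing in the density condition \eqref{eq_bbb}.
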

\begin{proof}
	Let $c \in\ell_\phi^2\pairsetmult{\Lambda}$.
As in \cite{berndtsson-ortega-95} we construct a weight that has singularities on $\Lambda$. Let $E(z)=\frac{1}{\pi} \log |z|^{2}$ and $$
v=E \star\left(\nu-\nu \star \chi_{r}\right).
$$

Let $\psi :=\phi+\pi v .$ The modified weight $\psi$ has the properties
\begin{align}
\Delta \psi &\geq \pi \nu+\delta \geq \delta, \nonumber \\
\psi &\leq \phi,  \nonumber
\end{align}
and for each $\lambda \in \Lambda$ the following inequality holds
\begin{equation}\label{eq:psi_phi} |\psi(z)- m_\Lambda(\lambda) \log | z-\lambda|^{2}-\phi(z) | \leq C_{\rho},\end{equation}
when $z\in B(\lambda, \rho / 2)$, for some constant $C_\rho$  that depends on $\rho(\Lambda)$ and $r$, although this second dependency is not stressed in the notation. (Notice the factor $m_\Lambda(\lambda)$ in front of the logarithm.)

The first step is to construct the non-analytic interpolant. Fix $\lambda \in\Lambda$.
Note first that, by \eqref{eq_aaa}
and \eqref{eq_aaa_bis}, $(\Lambda, m_\Lambda)$ is compatible with $\tilde\phi$. By Lemma  \ref{lemma:local_interpolant} applied to the weight $\tilde\phi$, there is an analytic function $f_\lambda:B(\lambda,\rho/2) \longrightarrow \bC$ such that for each $j\in\{0,\ldots,m_\Lambda(\lambda)-1\}$,
\begin{align*}
\,\bar\partial_{\tilde\phi}^{*}\,	^{(j)} f_\lambda(\lambda) &= c_{(\lambda,j)},
\end{align*}
and, due to \eqref{eq_aaa}, for all $z \in B(\lambda,\rho/2)$
\begin{align}
\left| f_\lambda(z)\right|^2 e^{-\phi(z)} &\leq  C_\rho \sum_{j=0}^{m_{\Lambda}(\lambda)-1} |c_{(\lambda,j)}|^2 e^{-\phi(\lambda)}, \label{eq:bessel_bound}
\end{align}
where the constant $C_\rho$ is independent of $\lambda$.

Now we patch these functions together. We let $g \in C_{0}^{\infty}$ be 1 on $B(0, \rho / 4)$ and 0 outside $B(0, \rho / 2)$, satisfying $|\overline{\partial} g|<C_{\rho}^{'}$. Then
$$
f(z)=\sum_{\lambda \in \Lambda} f_{\lambda}(z) g(z-\lambda)
$$solves the interpolation problem, although this function might not be analytic.

We consider the problem $\overline{\partial} u= \overline{\partial}f$. By H\"ormander's estimate, there exists a solution satisfying
\begin{align*}
\int_{\mathbb{C}}|u(z)|^{2} e^{-\phi(z)}  \dA(z) &\leq \int_{\mathbb{C}}|u(z)|^{2} e^{-\psi(z)}  \dA(z)
 \lesssim \int_{\mathbb{C}}|\overline{\partial} f(z)|^{2} e^{-\psi(z)}  \dA(z) \\
&\lesssim \int_{\mathbb{C}}|\overline{\partial} f(z)|^{2} e^{-\phi(z)}  \dA(z)
\lesssim \sum_{\lambda\in\Lambda}{\sum_{l=0}^{m_{\Lambda}(\lambda)-1}
	\left|c_{(\lambda,l)}\right|^{2}} e^{-\phi(\lambda)}<\infty.
\end{align*}
For the first inequality we used that $\psi \leq \phi$. For the second we used that $f$ is the solution provided by H\"ormander's estimate\footnote{In principle, $\Delta\psi$ is a distribution; H\"ormander's $L^2$-estimate must be combined with a regularization argument to yield the conclusion.} and that $\Delta \psi>\delta.$  For the third, we used \eqref{eq:psi_phi} and the fact that $\overline{\partial} f$ is zero on a $\rho / 4$ -neighborhood of $\Lambda$. The last inequality follows from the separation of $\Lambda$ and \eqref{eq:bessel_bound}.

The function $u$ is analytic on $B(\lambda, \rho / 4)$, because $\overline{\partial} u= \overline{\partial}f = 0$ on $B(\lambda, \rho / 4)$. In addition, we estimate
\begin{align*}
&\int_{B(\lambda,\rho/4)}\left|\frac{u(z)}{(z-\lambda)^{m_\Lambda(\lambda)}}\right|^{2} \dA(z)  = \int_{B(\lambda,\rho/4)}|u(z)|^{2} e^{-m_\Lambda(\lambda) \log |z-\lambda|^{2}} \dA(z) \\
&\qquad\leq C_\rho \sup_{w\in B(\lambda,\rho/4)} e^{\phi(w)} \int_{B(\lambda,\rho/4)}|u(z)|^{2} e^{-\psi(z)} \dA(z)
\\
&\qquad\leq C_\rho \sup_{w\in B(\lambda,\rho/4)} e^{\phi(w)} \int_{\mathbb{C}}|u(z)|^{2} e^{-\psi(z)} \dA(z) <\infty,
\end{align*}
and conclude that $\partial^{j} u(\lambda) = 0$, for each $\lambda\in\Lambda$ and $0\leq j\leq m_\Lambda(\lambda)-1$.
Moreover, \begin{equation*}
\,\bar\partial_{\tilde\phi}^{*}\,	^{(j)}u(\lambda)
= (-1)^j e^{\tilde\phi(\lambda)}\partial^j\left( u e^{-\tilde\phi} \right)(\lambda) = (-1)^j e^{\tilde\phi(\lambda)}{\left[\sum_{l=0}^{j} \binom{j}{l} \partial^{\thinspace l} u (\lambda)
	\partial^{\thinspace j-l}\left(e^{-\tilde\phi}  \right)(\lambda)   \right] }=0.
\end{equation*}
Thus, the function $f-u$ is holomorphic, belongs to $\fock{\phi}$ and solves the desired interpolation problem
with respect to $\bar\partial^*_{\tilde\phi}$.
\end{proof}

\subsection{Sampling}\label{sec:suf_ss}

\begin{prop}\label{prop:sampling-sufficiency}
Let $\left(\Lambda, m_{\Lambda}\right)$ be a separated set with multiplicities that is compatible with $\phi$. Assume additionally that $\Delta\phi$ is continuous.
Let $\chi_{r}=\frac{1}{\pi r^{2}} 1_{B(0, r)}$ and $\nu :=\sum_{\lambda \in \Lambda} m_\Lambda (\lambda) \cdot  \delta_{\lambda}$. Suppose that there exists $\delta>0$ and $r>0$ such that
$$
\pi \nu \star \chi_{r}(z) > \Delta \phi(z)+\delta,
$$
	for all $z\in\mathbb{C}$. Then $(\Lambda, m_\Lambda)$ is a sampling set for $\fock{\phi}$.
\end{prop}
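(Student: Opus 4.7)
The upper (Bessel-type) half of the sampling inequality is immediate from Lemma~\ref{lemma:compare_dbarstar_normf}: summing its pointwise bound $|\dbarstarr{f}{j}{\phi}(\lambda)|^2 e^{-\phi(\lambda)}\lesssim\int_{B(\lambda,1)}|f|^2 e^{-\phi}\,\dA$ over $\lambda\in\Lambda$ and $0\leq j\leq{\nLambda}$, and invoking the bounded overlap of the unit balls $\{B(\lambda,1)\}_\lambda$ (guaranteed by separation and the uniform bound on $m_\Lambda$), yields $\sum_{\lambda,j}|\dbarstarr{f}{j}{\phi}(\lambda)|^2 e^{-\phi(\lambda)}\lesssim\|f\|_\phi^2$. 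The substantive content is the reverse inequality
\[
\|f\|_\phi^2\;\lesssim\;S(f):=\sum_{\lambda\in\Lambda}\sum_{j=0}^{m_\Lambda(\lambda)-1}\bigl|\dbarstarr{f}{j}{\phi}(\lambda)\bigr|^2 e^{-\phi(\lambda)},\qquad f\in\fock{\phi},
\]
which I would attack by adapting the singular-weight $\bar\partial$-surgery of \cite{berndtsson-ortega-95} that powers the interpolation proof of Proposition~\ref{prop:interpolation_sufficiency}.

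The strategy is to build, from the sampling data of $f$, an entire function $F\in\fock{\phi}$ with $\dbarstarr{F}{j}{\phi}(\lambda)=\dbarstarr{f}{j}{\phi}(\lambda)$ for every $(\lambda,j)$ and $\|F\|_\phi\lesssim S(f)^{1/2}$, and then to conclude $F=f$ from the density hypothesis. For the construction, first discretize: Lemma~\ref{lemma:local_interpolant} applied at each $\lambda$ with data $c_j=\dbarstarr{f}{j}{\phi}(\lambda)$ supplies an analytic local interpolant $f_\lambda\colon B(\lambda,\rho/2)\to\bC$ matching the weighted derivatives at $\lambda$ with $|f_\lambda(z)|^2 e^{-\phi(z)}\leq C_\rho\sum_j|c_j|^2 e^{-\phi(\lambda)}$. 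Pasting via cutoffs $\chi_\lambda\in C^\infty_c(B(\lambda,\rho/2))$ equal to $1$ on $B(\lambda,\rho/4)$ with $|\bar\partial\chi_\lambda|\lesssim 1$, the patched function $g:=\sum_\lambda f_\lambda\chi_\lambda$ reproduces the weighted derivatives of $f$ at every $\lambda$, satisfies $\|g\|_\phi^2\lesssim S(f)$ by separation, and has $\bar\partial g=\sum_\lambda f_\lambda\,\bar\partial\chi_\lambda$ supported in the annuli $A_\lambda=B(\lambda,\rho/2)\setminus B(\lambda,\rho/4)$, disjoint from $\Lambda$, with $\|\bar\partial g\|_\phi^2\lesssim S(f)$. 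Setting $v:=E\star(\nu-\nu\star\chi_r)$ with $E(z)=\frac{1}{\pi}\log|z|^2$, one then introduces a singular modified weight $\psi$ derived from $v$, solves $\bar\partial u=\bar\partial g$ by a suitable form of H\"ormander's $L^2$-estimate together with the necessary regularization of $\psi$ to absorb distributional point masses, and upgrades the resulting bound to $\|u\|_\phi^2\lesssim S(f)$ by a local Cauchy argument on the disks $B(\lambda,\rho/4)$ (where $u$ is holomorphic because $\bar\partial g$ vanishes there). The logarithmic singularities of $\psi$ at $\Lambda$ force $u$ to vanish at each $\lambda$ to order $m_\Lambda(\lambda)$, so that $\dbarstarr{u}{j}{\phi}(\lambda)=0$ for all $(\lambda,j)$, via the Leibniz expansion of $\dbarstarr{}{j}{\phi}=(-1)^j e^{\phi}\partial^j(\cdot\,e^{-\phi})$; then $F:=g-u$ is the required entire function.

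To conclude $F=f$, consider the entire function $h:=f-F\in\fock{\phi}$. All of its weighted derivatives vanish on $\Lambda$, and the same Leibniz expansion then forces $h$ to have a zero of order $m_\Lambda(\lambda)$ at each $\lambda$, so $h$ lies in the singular Fock space $\fock{\psi}$ of entire functions with these prescribed zeros. Under the strict density hypothesis $\pi\nu\star\chi_r>\Delta\phi+\delta$, a weighted Jensen/zero-counting argument yields $\fock{\psi}=\{0\}$---the density of prescribed zeros exceeds the Blaschke-type bound $\Delta\phi/\pi$ coming from the weighted Jensen formula---so $h=0$ and $\|f\|_\phi=\|F\|_\phi\lesssim S(f)^{1/2}$. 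I expect the main technical obstacle to be the delicate choice and regularization of the singular weight $\psi$, since the natural candidates have a Laplacian of the wrong sign away from $\Lambda$ and must be reconciled with H\"ormander's coercivity requirement; that reconciliation, together with the quantitative Jensen step converting the strict density inequality into the uniqueness statement $\fock{\psi}=\{0\}$, mirrors the analysis of \cite{berndtsson-ortega-95} but requires additional care here to accommodate the iterated derivatives.
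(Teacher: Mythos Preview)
Your plan has a genuine gap at the $\bar\partial$-step, and it is not a technicality that ``additional care'' will fix. With $v=E\star(\nu-\nu\star\chi_r)$ and $\psi=\phi+\pi v$ one computes
\[
\Delta\psi \;=\; \Delta\phi \;+\; \pi\nu \;-\; \pi\,\nu\star\chi_r,
\]
so off $\Lambda$ (where the point part $\pi\nu$ contributes nothing) the sampling hypothesis $\pi\,\nu\star\chi_r>\Delta\phi+\delta$ forces $\Delta\psi<-\delta<0$. H\"ormander's estimate needs $\Delta\psi$ bounded below by a positive constant, and here the sign is irreparably wrong: the very inequality you are assuming makes the curvature of the singular weight negative on the whole support of $\bar\partial g$ (the annuli $A_\lambda$). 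No regularization can cure this, because the negativity is a pointwise consequence of the hypothesis, not an artifact of the Dirac masses. Your appeal to \cite{berndtsson-ortega-95} for a ``reconciliation'' is misplaced: in that paper the sampling argument does \emph{not} use H\"ormander's estimate at all. Consequently you cannot manufacture an entire $F$ with $\|F\|_\phi\lesssim S(f)^{1/2}$ and matching jets on $\Lambda$, and the uniqueness step never gets off the ground. (The uniqueness step itself---``too many prescribed zeros forces $h\equiv 0$''---is essentially the qualitative sampling statement and would need its own proof; but that is secondary.)

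The paper's proof goes in the opposite direction and exploits the sign reversal rather than fighting it. For holomorphic $h$ one has the elementary identity
\[
\Delta\bigl(|h|^2 e^{-\psi}\bigr)\;=\;e^{-\psi}\,\bigl|\partial h - h\,\partial\psi\bigr|^2 \;-\; |h|^2 e^{-\psi}\,\Delta\psi,
\]
so $\int_{\bC}|h|^2 e^{-\psi}\Delta\psi\,\dA\geq 0$. Taking $\psi=\phi+\pi v$ with a \emph{smoothed} measure $\tilde\nu=t\sum_\lambda \tfrac{m_\Lambda(\lambda)}{\pi\varepsilon^2}1_{B(\lambda,\varepsilon)}$ (so that $\Delta\psi=\Delta\phi+\pi\tilde\nu-\pi\tilde\nu\star\chi_r$ and $|\psi-\phi|\leq C_\varepsilon$), the hypothesis $\pi\tilde\nu\star\chi_r>\Delta\phi+\delta/2$ yields directly
\[
\tfrac{\delta}{2}\int_{\bC}|h|^2 e^{-\psi}\,\dA \;\leq\; \int_{\bC}|h|^2 e^{-\psi}\,\pi\tilde\nu\,\dA
\;\lesssim\; \sum_{\lambda\in\Lambda}\frac{m_\Lambda(\lambda)}{\varepsilon^{2+2t\,m_\Lambda(\lambda)}}\int_{B(\lambda,\varepsilon)}|h|^2 e^{-\phi}\,\dA .
\]
This already controls $\|h\|_\phi^2$ by local $L^2$-masses on the small disks $B(\lambda,\varepsilon)$. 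The remaining work is purely local: write $g_\lambda=h\,e^{-G_\lambda}$ with $G_\lambda$ from Lemma~\ref{lemma:bounds_potentials}, Taylor-expand $g_\lambda$ to order $m_\Lambda(\lambda)-1$ on $B(\lambda,\varepsilon)$, convert $\partial^j g_\lambda(\lambda)$ into $\dbarstarr{h}{j}{\phi}(\lambda)$ via the Bell-polynomial relation of Lemma~\ref{lemma:relationship}, and bound the Taylor remainder by a Cauchy estimate. Summing over $\lambda$ produces the sampling sum $S(h)$ plus an error $\varepsilon^{2(1-t)}\operatorname{rel}(\Lambda)\|h\|_\phi^2$, which is absorbed for $\varepsilon$ small. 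There is no $\bar\partial$-surgery, no singular weight, and no uniqueness argument.
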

\begin{proof}
	Let $0 <\varepsilon < \min\left(1,\rho/2\right)$ and $0<t<1$. We follow again \cite{berndtsson-ortega-95}. We construct the weight exactly as they do, except that we add each point mass $m_\Lambda(\lambda)$ times at $\lambda$.   We write
	$$
	\tilde{\nu}(z) := t \sum_{\lambda\in\Lambda} \frac{m_\Lambda(\lambda)}{\pi \varepsilon^{2}} 1_{B(0, \varepsilon)}(z-\lambda).
	$$
	Since $\Delta \phi$ is bounded, we can choose $t$ so close to 1 that
	$$
	\pi \tilde{\nu} \star \chi_{r}>\Delta \phi+\delta / 2 .
	$$
	We let $E(z)=\frac{1}{\pi} \log |z|^{2}$, $v=E \star\left(\tilde{\nu}-\chi_{r} \star \tilde{\nu}\right)$ and $\psi=\phi+\pi v .$
	Note that $$E \star \tilde{\nu}(z)=\sum_{\lambda\in\Lambda}\ \frac{t \ m_\Lambda(\lambda)}{\left(\pi\varepsilon\right)^2} \int_{B(\lambda,\varepsilon)}\log{|z-w|^2} dA(w),$$
	and for $E \star \tilde{\nu} \star \chi_{r}$ we obtain a similar expression. Using that $\log|z-w|^2$ is harmonic on $w\in B(\lambda,\varepsilon)$ when $d(z,\lambda)>r+\varepsilon$ and that the set is relatively separated, we obtain that $|v| \leq C_\varepsilon$. Moreover,
	$$
	\phi-C_{\varepsilon} \leq \psi \leq \phi
	$$ and
	$$
	\left|\psi-m_\Lambda(\lambda)\ t \log \varepsilon^{2}-\phi\right| \leq C_r
	$$
	on $B(\lambda, \varepsilon)$. Notice the factor $m_\Lambda(\lambda)$ in front of the logarithm.

	Let $h\in\fock{\phi}$. As shown in  \cite[Eq. 3]{berndtsson-ortega-95}, we have
	$$
	\delta/2 \int_{\bC}|h(z)|^{2} e^{-\psi(z)} \dA(z) \leq \int_{\bC}|h(z)|^{2} e^{-\psi(z)} d \tilde{\nu}(z) .
	$$
Thus,
	$$
	\int_{\bC}|h(z)|^{2} e^{-\phi(z)} \dA(z) \lesssim C_r \sum_{\lambda \in \Lambda} m_\Lambda(\lambda) \frac{\varepsilon^{-2  m_\Lambda(\lambda) t}}{\varepsilon^{2}} \int_{B(\lambda, \varepsilon)}|h(z)|^{2} e^{-\phi(z)}\dA(z).
	$$
	Let $g_{\lambda}=h e^{-G_{\lambda}},$ where $G_{\lambda}$ is as in Lemma \ref{lemma:bounds_potentials}, with respect to the set $B(\lambda,1)$.
	  Then
	$$
	\int_{B(\lambda, \varepsilon)}|h(z)|^{2} e^{-\phi(z)}\dA(z) \lesssim \int_{B(\lambda, \varepsilon)}\left|g_{\lambda}(z)\right|^{2} e^{-\phi(\lambda)} \dA(z).
	$$
	We now use the $(m_\Lambda(\lambda)-1)$-th order Taylor expansion of $g_\lambda$ on $B(\lambda, \varepsilon)$
	\begin{equation}\label{eq:taylor}
	\left|g_{\lambda}(z)\right|^{2} \lesssim  \sum_{j=0}^{m_\Lambda(\lambda)-1} \varepsilon^{2j}\left|\partial^{j} g_{\lambda}(\lambda)\right|^{2} + \varepsilon^{2\thinspace m_\Lambda(\lambda)} \sup _{z \in B(\lambda, \varepsilon)}\left|\partial^{m_\Lambda(\lambda)} g_{\lambda}(z)\right|^{2} .
\end{equation}
We estimate the $m_\Lambda(\lambda)$-th term by Cauchy estimate
$$
\sup _{z \in B(\lambda, \varepsilon)}\left|\partial^{m_\Lambda(\lambda)} g_{\lambda}(z)\right|^{2} e^{-\phi(\lambda)} \lesssim \int_{B(\lambda, 1)}|h(z)|^{2} e^{-\phi(z)} \dA(z).
$$ 
The remaining terms in \eqref{eq:taylor} are estimated observing that for any $w \in B(\lambda, \varepsilon)$
\begin{equation*}
\partial^{j} g_{\lambda}(w) = \partial^{j} (h e^{-\phi} e^{\phi-G_\lambda}) (w) = e^{\phi(w)-G_\lambda(w)}  \sum_{k=0}^{j} \binom{j}{k} \partial^{k}( h e^{-\phi})(w) \cdot  B_{j-k}^{\phi-G_\lambda}(w),
\end{equation*}
where $B_j$ is the $j-$th Bell polynomial. Evaluating at $\lambda$ we obtain:
$$\partial^{j} g_{\lambda}(\lambda) = \sum_{k=0}^{j} (-1)^k \binom{j}{k} \dbarstar{h}{k}(\lambda) \cdot  B_{j-k}^{\phi-G_\lambda}(\lambda).$$

Thus, for each $j \leq m_\Lambda(\lambda)-1$, and applying Remark \ref{rem:poly_bounded} we have
\begin{align*}
\left|\partial^{j} g_{\lambda}(\lambda)\right|^{2} e^{-\phi(\lambda)} &= \left| \sum_{k=0}^{j} (-1)^k \binom{j}{k} \dbarstar{h}{k}(\lambda) \cdot B_{j-k}^{\phi-G_\lambda}(\lambda)  \right|^{2} e^{-\phi(\lambda)} \lesssim  \sum_{k=0}^{m_\Lambda(\lambda)-1}  |\dbarstar{h}{k}(\lambda)|^2 e^{-\phi(\lambda)}.
\\
\end{align*}

Putting everything together, we obtain
\begin{align}\label{eq:almost_sampling_inequality}
\begin{aligned}
&\int_{\bC}|h|^{2} e^{-\phi} \dA 
\\
&\qquad
\lesssim  C_r \Big( \sum_{\lambda\in\Lambda} \varepsilon^{-2tm_\Lambda(\lambda)} \sum_{j=0}^{m_\Lambda(\lambda)-1}  |\dbarstar{h}{j}(\lambda)|^2     e^{-\phi(\lambda)} +  \sum_{\lambda\in\Lambda} \varepsilon^{2 m_{\Lambda}(\lambda) (1-t)} \int_{B(\lambda,1)}|h|^{2} e^{-\phi} \dA \Big).
\end{aligned}
\end{align}

Since $1 \leq m_{\Lambda}(\lambda)$,
$$\varepsilon^{2 m_{\Lambda}(\lambda) (1-t)} \leq \varepsilon^{2 (1-t)}.$$ The relative separateness of
$\Lambda$ implies
\begin{align*}%\label{eq:bound_tails_1}
\sum_{\lambda\in\Lambda} \varepsilon^{2 m_{\Lambda}(\lambda) (1-t)} \int_{B(\lambda,1)}|h|^{2} e^{-\phi} \dA &\leq \varepsilon^{2  (1-t)} \sum_{\lambda\in\Lambda} \  \int_{B(\lambda,1)} |h|^{2} e^{-\phi} \dA
\\
&\leq \varepsilon^{2  (1-t)} \operatorname{rel}(\Lambda)\, \int_{\bC} |h|^{2} e^{-\phi} \dA.
\end{align*}
Therefore,
\begin{equation*}
\int_{\bC} |h|^{2} e^{-\phi}  \dA \lesssim C_{\varepsilon, r} \sum_{\lambda\in\Lambda} \sum_{j=0}^{m_\Lambda(\lambda)-1}  |\dbarstar{h}{j}(\lambda)|^2 e^{-\phi(\lambda)} + \varepsilon^{2  (1-t)} \operatorname{rel}(\Lambda)\ C_r\ \int_{\bC} |h|^{2} e^{-\phi} \dA.
\end{equation*}
Choosing $\varepsilon$ small enough, the last term can be absorbed into the left-hand side, yielding the desired sampling estimate.
\end{proof}

\section{Necessary conditions for interpolation and sampling}\label{sec_nec}
In this section we reduce the problem of deriving necessary conditions for interpolation and sampling with derivatives to the corresponding problem without derivatives.
We assume throughout this section that $\left(\Lambda, m_{\Lambda}\right)$ is a separated set with multiplicities that is compatible with
the weight $\phi$.

Our arguments are based on inspecting Taylor expansions. The following observation will be used repeatedly.

\begin{remark}\label{rem:equation}
	Let $\varepsilon\in(0,1/4)$, $\lambda \in\bC$ and $F:U\subseteq\bC \longrightarrow \bC$ be holomorphic, where $B(\lambda,1)\subseteq U$. The Taylor expansion of $F$ of degree $n$ at $\lambda$,
	$$F(z)=\sum_{k= 0}^{n} \frac{F^{(k)}(\lambda)}{k!}(z-\lambda)^k+E_n(z),$$
	satisfies
	\begin{equation}\label{eq:F_lamda_prime_2}
	|E_{n}(z)| \lesssim \varepsilon^{n+1}\int_{B(\lambda,1)} |F|\, \dA, \qquad |z-\lambda| \leq \varepsilon.
	\end{equation}
In particular, if $\lambda'$ is such that $|\lambda-\lambda'|=\varepsilon$ and $w \in\bC$ is such that
	\begin{equation*}F^{(n)}(\lambda)=\frac{n!}{(\lambda'-\lambda)^n}\left( w -\sum_{k= 0}^{n-1} \frac{F^{(k)}(\lambda)}{k!}(\lambda'-\lambda)^k \right),\end{equation*}
	 then
	\begin{align}\label{eq:F_lamda_prime}
	|F(\lambda')-w|^2  \lesssim \varepsilon^{2(n+1)}\int_{B(\lambda,1)} |F|^2 dA.
	\end{align}
\end{remark}

As a first step towards the necessary conditions we compute derivatives of weighted functions.

\begin{lemma}\label{lemma:relationship} Let $f\in \fock{\phi}$, $\lambda\in\Lambda$, and $\varepsilon>0$. We write $\phi=h_{\lambda}+G[\Delta\phi]$ on $B(\lambda,\varepsilon)$ as in Lemma \ref{lemma:bounds_potentials}. $H_\lambda:B(\lambda,\varepsilon) \longrightarrow \bC$ is a holomorphic function such that $2\operatorname{Re}{H_\lambda}=h_\lambda$.
	 Then
\begin{equation}\label{eq:d_dbar} \partial^{k} \left(fe^{-H_\lambda}\right) =  \sum_{j=0}^{k} (-1)^j \binom{k}{j} \, \dbarstar{f}{j}\,
	B_{k-j}^{G[\Delta\phi]}
	e^{-H_\lambda} .\end{equation}
	\end{lemma}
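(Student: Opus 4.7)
The identity is essentially a bookkeeping exercise: it rewrites $\partial^k(fe^{-H_\lambda})$, whose factor $e^{-H_\lambda}$ is holomorphic, in terms of the weighted adjoint derivatives $\dbarstar{f}{j}$, which are built from $\partial^j(fe^{-\phi})$. The plan is to factor out the antiholomorphic piece of $e^{-\phi}$, apply Leibniz, and then invoke Fa\`a di Bruno to handle the logarithmic potential.

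The starting observation is the decomposition
\[
\phi = h_\lambda + G[\Delta\phi] = H_\lambda + \overline{H_\lambda} + G[\Delta\phi] \quad\text{on } B(\lambda,\varepsilon),
\]
which yields the algebraic identity
\[
f e^{-H_\lambda} = e^{\overline{H_\lambda}}\, e^{G[\Delta\phi]}\, (fe^{-\phi}).
\]
Since $\overline{H_\lambda}$ is antiholomorphic, $\partial e^{\overline{H_\lambda}} = 0$, so $e^{\overline{H_\lambda}}$ passes through the operator $\partial^k$ as a constant. I would then apply the Leibniz rule to $\partial^k\bigl(e^{G[\Delta\phi]}\,(fe^{-\phi})\bigr)$, obtaining
\[
\partial^k(fe^{-H_\lambda}) = e^{\overline{H_\lambda}} \sum_{j=0}^{k} \binom{k}{j}\, \partial^{j}(fe^{-\phi})\, \partial^{k-j}\bigl(e^{G[\Delta\phi]}\bigr).
\]

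The next step is to translate both Leibniz factors into the desired form. The definition of the weighted adjoint operator gives
\[
\partial^{j}(fe^{-\phi}) = (-1)^j e^{-\phi}\, \dbarstar{f}{j},
\]
while Fa\`a di Bruno's formula applied to the composition $e^{G[\Delta\phi]}$ (with the convention $B_0 = 1$) gives
\[
\partial^{k-j}\bigl(e^{G[\Delta\phi]}\bigr) = e^{G[\Delta\phi]} B_{k-j}^{G[\Delta\phi]}.
\]
Substituting both into the Leibniz sum produces
\[
\partial^k(fe^{-H_\lambda}) = e^{\overline{H_\lambda}-\phi+G[\Delta\phi]} \sum_{j=0}^{k} (-1)^j \binom{k}{j}\, \dbarstar{f}{j}\, B_{k-j}^{G[\Delta\phi]}.
\]

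Finally, using the decomposition of $\phi$ once more, the exponential prefactor simplifies:
\[
\overline{H_\lambda} - \phi + G[\Delta\phi] = \overline{H_\lambda} - H_\lambda - \overline{H_\lambda} - G[\Delta\phi] + G[\Delta\phi] = -H_\lambda,
\]
which is exactly the factor $e^{-H_\lambda}$ appearing in the claimed formula. There is no genuine obstacle here: the only delicate point is correctly identifying which part of the exponential is annihilated by $\partial$, and verifying that the identity $\partial^{k-j}(e^{G[\Delta\phi]}) = e^{G[\Delta\phi]} B_{k-j}^{G[\Delta\phi]}$ remains valid for $k-j=0$. Both are handled by the conventions already fixed in Section~\ref{sec:Bell}, so the proof is a short calculation.
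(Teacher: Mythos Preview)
Your proof is correct and follows essentially the same approach as the paper: factor out the antiholomorphic piece $e^{\overline{H_\lambda}}$, apply Leibniz to the product $e^{G[\Delta\phi]}(fe^{-\phi})$, rewrite $\partial^j(fe^{-\phi})$ via the definition of $\dbarstar{f}{j}$, and use Fa\`a di Bruno for $\partial^{k-j}(e^{G[\Delta\phi]})$. The only cosmetic difference is that the paper simplifies the exponential prefactor step by step (writing $e^{\overline{H_\lambda}} = e^{\phi} e^{-H_\lambda - G[\Delta\phi]}$ midway through), whereas you collect all exponents at the end and cancel them in one line.
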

	\begin{proof}
Since $\partial(e^{-\overline{H_\lambda}})=\partial(\overline{e^{-H_\lambda}})=\overline{\left(\overline{\partial} {e^{-H_\lambda}}    \right)}\equiv 0,$
		\begin{align*}
\partial^{k}\left(fe^{-H_\lambda}\right) &= \partial^{k} \left(fe^{-\phi}e^{G[\Delta\phi]}e^{\overline{H_\lambda}} \right) =  \partial^{k} \left(fe^{-\phi}e^{G[\Delta\phi]} \right) e^{\overline{H_\lambda}} \\
		&= \sum_{j=0}^{k} \binom{k}{j} \partial^{j} \left(fe^{-\phi}\right) \partial^{k-j}\left( e^{G[\Delta\phi]} \right) e^{\overline{H_\lambda}}
		\\&= \sum_{j=0}^{k} \binom{k}{j} \partial^{j} \left(fe^{-\phi}\right) e^{\phi} \partial^{k-j}\left( e^{G[\Delta\phi]} \right) e^{-H_\lambda-G[\Delta\phi]} \\
		&= \sum_{j=0}^{k} (-1)^j\,  \binom{k}{j} \, \dbarstar{f}{j}\, \partial^{k-j} \left( e^{G[\Delta\phi]} \right)\, e^{-H_\lambda-G[\Delta\phi]}
		\\&= \sum_{j=0}^{k} (-1)^j\,  \binom{k}{j} \, \dbarstar{f}{j}\,
e^{G[\Delta\phi]} B_{k-j}^{G[\Delta\phi]}
 e^{-H_\lambda-G[\Delta\phi]} \\
 &= \sum_{j=0}^{k} (-1)^j\,  \binom{k}{j} \, \dbarstar{f}{j}\,
B_{k-j}^{G[\Delta\phi]}
 e^{-H_\lambda} .
		\end{align*}
	\end{proof}

\subsection{Interpolation}\label{sec:nec_is}

We now show that a weighted interpolating set can be modified to reduce its maximal multiplicity while preserving its density and interpolating property.

\begin{prop}\label{prop:is_decreasing}
Let $\left(\Lambda, m_{\Lambda}\right)$ be a separated set with multiplicities that is compatible with $\phi$. Suppose that
$(\Lambda,m_{\Lambda})$ is interpolating for $\fock{\phi}(\bC)$, and that $\sup_{\lambda\in\Lambda} m_{\Lambda}(\lambda) = {\nLambda}+1 \geq 2$. Then there exists a separated and interpolating set $(\tilde\Lambda,m_{\tilde\Lambda})$ such that $\sup_{\lambda\in\tilde{\Lambda}} m_{\tilde{\Lambda}}={\nLambda}$ and $D^{\pm}(\Lambda, m_{\Lambda})=D^{\pm}(\tilde\Lambda, m_{\tilde\Lambda})$.
\end{prop}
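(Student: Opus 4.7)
The plan is to lower the multiplicity by one at every point of maximal multiplicity, compensating with auxiliary satellite points at a small distance $\eta>0$. Write $\Lambda_{\max}:=\{\lambda\in\Lambda:m_{\Lambda}(\lambda)={\nLambda}+1\}$, fix $\eta\in(0,\min(\rho/3,\,1/4))$ to be chosen later, and set $\lambda':=\lambda+\eta$ for each $\lambda\in\Lambda_{\max}$. Define $\tilde{\Lambda}:=\Lambda\cup\{\lambda':\lambda\in\Lambda_{\max}\}$ with $m_{\tilde{\Lambda}}(\lambda)={\nLambda}$ for $\lambda\in\Lambda_{\max}$, $m_{\tilde{\Lambda}}(\lambda')=1$, and $m_{\tilde{\Lambda}}=m_{\Lambda}$ elsewhere. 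Separation of $\tilde{\Lambda}$ is immediate from the bounds $|\lambda'-\lambda|=\eta$, $|\lambda'-\mu|\geq\rho-\eta$ for $\mu\in\Lambda\setminus\{\lambda\}$, and $|\lambda_1'-\lambda_2'|\geq\rho-2\eta$. Equality of densities follows from $\nu_{\tilde{\Lambda}}-\nu_{\Lambda}=\sum_{\lambda\in\Lambda_{\max}}(\delta_{\lambda+\eta}-\delta_{\lambda})$: the discrepancy $|\numberofpoints{z}{r}{\tilde{\Lambda}}{\tilde{\Lambda}}-\numberofpoints{z}{r}{\Lambda}{\Lambda}|$ is bounded by the number of $\lambda\in\Lambda_{\max}$ with $|\lambda-z|\in(r-\eta,r+\eta)$, which is $O(r\eta)$ by separation, while $\int_{B(z,r)}\Delta\phi\,\dA\gtrsim r^{2}$ by compatibility.

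For the interpolating property, let $T_{\Lambda}:\fock{\phi}\to\ell_\phi^2\pairsetmult{\Lambda}$ and $T_{\tilde{\Lambda}}:\fock{\phi}\to\ell_\phi^2\pairsetmult{\tilde{\Lambda}}$ denote the sampling operators. Since $T_{\Lambda}$ is surjective by hypothesis, the open mapping theorem yields a bounded right inverse $R$ with $T_{\Lambda}R=\operatorname{Id}$. We construct a bounded matching map $M:\ell_\phi^2\pairsetmult{\tilde{\Lambda}}\to\ell_\phi^2\pairsetmult{\Lambda}$ so that $T_{\tilde{\Lambda}}RM=\operatorname{Id}+E$ with $\|E\|<1$; a Neumann series then provides the bounded right inverse $RM(\operatorname{Id}+E)^{-1}$ of $T_{\tilde{\Lambda}}$, proving that $(\tilde{\Lambda},m_{\tilde{\Lambda}})$ is interpolating. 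Off the entries attached to $\Lambda_{\max}$ and its satellites, $M$ is the identity. For $\lambda\in\Lambda_{\max}$, apply Lemma \ref{lemma:bounds_potentials} on a fixed ball $B(\lambda,r_{0})$ containing $B(\lambda,1)$ to produce $H_{\lambda}$; by Lemma \ref{lemma:relationship}, the Taylor coefficients of $F:=fe^{-H_{\lambda}}$ at $\lambda$ form an invertible triangular linear combination of $\{\dbarstar{f}{j}(\lambda)\}_{0\leq j\leq k}$, with uniformly bounded coefficients and non-vanishing diagonal $(-1)^{k}e^{-H_{\lambda}(\lambda)}$. Set $(Mc)_{(\lambda,j)}:=c_{(\lambda,j)}$ for $j\leq n_{\Lambda}-1$, and pick $(Mc)_{(\lambda,n_{\Lambda})}$ as the unique value that forces the degree-$n_{\Lambda}$ Taylor polynomial of $F$ at $\lambda$ to agree with $c_{(\lambda',0)}e^{-H_{\lambda}(\lambda')}$ at $z=\lambda'$. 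Solving for this top entry costs only the factor $(\lambda'-\lambda)^{-n_{\Lambda}}=\eta^{-n_{\Lambda}}$, so $\|M\|\lesssim\eta^{-n_{\Lambda}}$.

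By construction, $T_{\tilde{\Lambda}}RMc$ agrees with $c$ at every entry except the satellites $(\lambda',0)$, where $(Ec)_{(\lambda',0)}=f(\lambda')-c_{(\lambda',0)}=E_{n_{\Lambda}}(\lambda')e^{H_{\lambda}(\lambda')}$ is controlled by the Taylor remainder of $F=(RMc)e^{-H_{\lambda}}$. Remark \ref{rem:equation} bounds $|E_{n_{\Lambda}}(\lambda')|^{2}\lesssim\eta^{2(n_{\Lambda}+1)}\int_{B(\lambda,1)}|F|^{2}\,\dA$, and $|e^{-H_{\lambda}}|^{2}\asymp e^{-\phi}$ on $B(\lambda,1)$ by Lemma \ref{lemma:bounds_potentials}, whence $|(Ec)_{(\lambda',0)}|^{2}e^{-\phi(\lambda')}\lesssim\eta^{2(n_{\Lambda}+1)}\int_{B(\lambda,1)}|RMc|^{2}e^{-\phi}\,\dA$. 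Summing over $\Lambda_{\max}$, whose separation yields finite overlap of the balls $B(\lambda,1)$, and then combining with $\|M\|\lesssim\eta^{-n_{\Lambda}}$ gives $\|Ec\|^{2}\lesssim\eta^{2}\|R\|^{2}\|c\|^{2}$, i.e.\ $\|E\|\lesssim\eta\|R\|$. The main obstacle is exactly this tension between the $\eta^{-n_{\Lambda}}$ blow-up of $\|M\|$ and the $\eta^{n_{\Lambda}+1}$ decay of the Taylor remainder; the argument closes because the latter carries one extra order of smallness, leaving a net factor of $\eta$ that can be made as small as desired by choosing $\eta$ sufficiently small.
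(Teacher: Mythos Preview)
Your proposal is correct and follows essentially the same route as the paper: add a satellite $\lambda'$ at distance $\eta$ to each point of maximal multiplicity, use the interpolation hypothesis on $(\Lambda,m_\Lambda)$ with a top-order datum chosen so that the degree-$n_\Lambda$ Taylor polynomial of $fe^{-H_\lambda}$ hits the prescribed value at $\lambda'$, and iterate to kill the $O(\eta^{n_\Lambda+1})$ remainder against the $O(\eta^{-n_\Lambda})$ cost of building that top datum. The paper writes out the iteration explicitly as a telescoping series $f=\sum_k f_k$, whereas you package the same recursion as a Neumann series for $(\mathrm{Id}+E)^{-1}$; your matching map $M$ is exactly the paper's assignment $\tilde a\mapsto a$ with top entry $b_\lambda$, and your error bound $\|E\|\lesssim\eta\|R\|$ is the paper's contraction factor $\varepsilon\,C_\Lambda'$. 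One cosmetic slip: the annulus count is $O(r)$ for fixed $\eta<\rho$, not $O(r\eta)$, but this is irrelevant once divided by $r^2$.
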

\begin{proof}
\noindent {\bf Step 1}. \emph{(Definition of the new set).}

	Let $0<\varepsilon<\min\{\rho(\Lambda)/2,1/4\}$. We define $$\Lambda_{\text{max}} := \left\{\lambda\in\Lambda \, : \, m_{\Lambda}(\lambda)=\sup_{z\in\Lambda} m_{\Lambda}(z)={\nLambda}+1 \right\}.$$ For each $\lambda \in \Lambda_{\text{max}}$ we choose  $\lambda'\in\bC$ such that $|\lambda-\lambda'|=\varepsilon$. We define $\Lambda'=\{\lambda' : \lambda\in\Lambda_{\text{max}} \}$. Since
$	\varepsilon< \rho(\Lambda)/2$,
it is clear that the map $\lambda\mapsto\lambda'$ is injective and that $\lambda'\not\in\Lambda$.

Now we consider the set $\tilde{\Lambda}=\Lambda\cup\Lambda'$ and the function $m_{\tilde\Lambda}:\tilde{\Lambda}\longrightarrow\bN$ defined by
\[
m_{\tilde\Lambda}(z) =
\begin{cases}
m_{\Lambda}(z) &\quad\text{if }z\in\Lambda\text{ and } m_{\Lambda}(z)\leq {\nLambda},\\
{\nLambda} &\quad\text{if }z\in\Lambda\text{ and } m_{\Lambda}(z)={\nLambda}+1,\\
1 &\quad\text{if }z\in\Lambda'.\\
\end{cases}
\]

Since $\varepsilon < \rho(\Lambda)/2$ it follows easily that $\tilde\Lambda$ is separated. It is also clear that $\sup m_{\tilde\Lambda}={\nLambda}$.

\noindent {\bf Step 2}. We show that if $\tilde{a} \in \ell_\phi^2\pairsetmult{\tilde\Lambda}$, then there exists $f\in\fock{\phi}$ satisfying:
		\begin{enumerate}[label=\textbf{(Q.\arabic*)}]
			\item \label{cond:interpolation_1}$\norm{f}_{\fock{\phi}}\leq C_\Lambda  \varepsilon^{-{\nLambda}}\norm{\tilde{a}}_{\ell_\phi^2\pairsetmult{\tilde\Lambda}}$,
			\item \label{cond:interpolation_2} $\dbarstar{f}{j}(\lambda) = \tilde{a}_{(\lambda,j)}$, for each $\lambda \in \Lambda$ and $0\leq j \leq{\min\{{\nLambda}-1, m_\Lambda(\lambda)-1  \}}$, 
			\item \label{cond:interpolation_3}
			$\norm{f - \tilde{a}_{(\cdot , 0)}}_{\ell_{\phi}^2(\Lambda')}\leq C_\Lambda \varepsilon^{{\nLambda}+1} \norm{f}_{\fock{\phi}}.$\footnote{To unload the notation, we write $\norm{f}_{\ell^2_\phi(\Lambda)}$ instead of
				$\norm{f|\Lambda}_{\ell^2_\phi(\Lambda)}$}
		\end{enumerate}

To prove the claim, for each $\lambda\in\Lambda$ we write $\phi=h_{\lambda}+G[\Delta\phi]$ on $B(\lambda,1)$ as in Theorem \ref{thm:Riesz_decomp}.
Since $h_\lambda$ is harmonic, there exists $H_\lambda:B(\lambda,1) \longrightarrow \bC$ analytic such that $2\operatorname{Re}{H_\lambda}=h_\lambda$.

For each $\lambda\in\Lambda_{\text{max}}$ we let $b_\lambda$ be defined by
		\begin{align}\label{eq:b_lambda_equation}
		\begin{aligned}
		(-1)^{\nLambda} \ b_{\lambda}\  e^{-H_\lambda(\lambda)} &= \frac{ {\nLambda}!}{(\lambda'-\lambda)^{\nLambda}} \Big(\tilde{a}_{(\lambda',0)} e^{-H_\lambda(\lambda')}
		\\
		&\qquad-\sum_{k=0}^{{\nLambda}-1} \sum_{j=0}^{k}
		(-1)^j \frac{\binom{k}{j}}{k!} \tilde{a}_{(\lambda,j)} B_{k-j}^{G[\Delta\phi]}(\lambda)   e^{-H_\lambda(\lambda)} (\lambda'-\lambda)^k  \Big)\\
		 &\qquad- \sum_{j=0}^{{\nLambda}-1} (-1)^j  \binom{{\nLambda}}{j}  \tilde{a}_{(\lambda,j)} B_{{\nLambda}-j}^{G[\Delta\phi]}(\lambda) e^{-H_\lambda(\lambda)}.
		\end{aligned}
		\end{align}
		The sequence satisfies
\begin{equation}\label{eq_mmm}
		\left|b_{\lambda}\right|^2  e^{-\phi(\lambda)} \lesssim \left|b_{\lambda}  e^{-H_\lambda(\lambda)}\right|^2 \lesssim \varepsilon^{-2{\nLambda}}\left(|\tilde{a}_{(\lambda',0)}|^2 e^{-\phi(\lambda')}+\sum_{j=0}^{{\nLambda}-1} |\tilde{a}_{(\lambda,j)}|^2 e^{-\phi(\lambda)}\right).\end{equation}
		We define a new sequence $a=\left\{ a_{(\lambda,j)} \right\}_{{\lambda\in\Lambda,\, 0\leq j \leq m_{\Lambda}(\lambda)-1 }} \subseteq \bC$ by:

			\[
a_{(\lambda,j)} :=
		\begin{cases}
				b_\lambda &\quad\text{if }\lambda\in\Lambda_{\text{max}}\text{ and } j={\nLambda},
\\
		\tilde{a}_{(\lambda,j)} &\quad\text{otherwise.}
		\end{cases}
		\]
		By \eqref{eq_mmm},
		\begin{equation}\label{eq_mmmm}
		\begin{split}
		\norm{a}_{ \ell_\phi^2\pairsetmult{\Lambda}}^2 \lesssim \varepsilon^{-2\nLambda}   \norm{\tilde{a}}_{\ell_\phi^2\pairsetmult{\tilde\Lambda}}^2       <\infty.
		\end{split}
		\end{equation}
		Since $(\Lambda,m_\Lambda)$ is an interpolating set, there exists a function $f \in\fock{\phi}$ such that \begin{itemize}
			\item $\norm{f}_{\fock{\phi}} \leq C_{\Lambda} \norm{a}_{ \ell_\phi^2\pairsetmult{\Lambda}}$,
			\footnote{As in the unweighted case, and with the same argument, the interpolation problem, if solvable, can be solved with norm control.}
			\item $\dbarstar{f}{j}(\lambda)=\tilde{a}_{(\lambda, j)}$, if $\lambda\in\Lambda$ and $0\leq j \leq{\min\{{\nLambda}-1, m_\Lambda(\lambda)-1  \}}$,
			\item $\dbarstar{f}{{\nLambda}}(\lambda)=b_{\lambda}$, if $\lambda\in\Lambda_{\text{max}}$.
		\end{itemize}
The first two conditions, together with \eqref{eq_mmmm} yield \ref{cond:interpolation_1} and \ref{cond:interpolation_2}.
To check \ref{cond:interpolation_3}, we let $\lambda\in \Lambda_{\text{max}}$, and note that, in terms of $f$,
\eqref{eq:b_lambda_equation} reads:
	\begin{align*}
&\sum_{j=0}^{{\nLambda}} (-1)^j \binom{{\nLambda}}{j}  \dbarstar{f}{j}(\lambda) B_{{\nLambda}-j}^{G[\Delta\phi]}(\lambda) e^{-H_\lambda(\lambda)}
\\
&\qquad=
 \frac{ {\nLambda}!}{(\lambda'  -\lambda)^{\nLambda}} \Big(\tilde{a}_{(\lambda',0)} e^{-H_\lambda(\lambda')}  -\sum_{k=0}^{{\nLambda}-1} \sum_{j=0}^{k}
	(-1)^j \frac{\binom{k}{j}}{k!} \dbarstar{f}{j}(\lambda) B_{k-j}^{G[\Delta\phi]}(\lambda)   e^{-H_\lambda(\lambda)} (\lambda'-\lambda)^k  \Big).
	\end{align*}
Applying \eqref{eq:d_dbar} to the last equation we obtain
	\begin{equation}\label{eq:n_derivative_feh}
\begin{split}
\partial^{{\nLambda}}({fe^{-H_\lambda}})(\lambda)  = \frac{ {\nLambda}!}{(\lambda'  -\lambda)^{\nLambda}} &\Big(\tilde{a}_{(\lambda',0)} e^{-H_\lambda(\lambda')} -\sum_{k=0}^{{\nLambda}-1}
\frac{\partial^{k}({fe^{-H_\lambda}})(\lambda)}{k!}  (\lambda'-\lambda)^k  \Big).
\end{split}
\end{equation}
We apply Remark \ref{rem:equation} to $F=fe^{-H_\lambda}$ and conclude from \eqref{eq:F_lamda_prime} that
 $$\left|f(\lambda')-\tilde{a}_{(\lambda',0)} \right|^2 e^{-\phi(\lambda')} \lesssim \left|f(\lambda')e^{-H_\lambda(\lambda')}-\tilde{a}_{(\lambda',0)} e^{-H_\lambda(\lambda')}\right|^2
  \lesssim \varepsilon^{2({\nLambda}+1)} \norm{f}_{L^2(B(\lambda',1),e^{-\phi})}^2.$$
 Therefore,
 \begin{align*}
 \norm{f - \tilde{a}_{(\cdot , 0)}}_{\ell_\phi^2(\Lambda')}^2 &\lesssim \sum_{\lambda' \in\Lambda'} \varepsilon^{2({\nLambda}+1)} \norm{f}_{L^2(B(\lambda',1),e^{-\phi})}^2
 \\&\lesssim \operatorname{rel}(\Lambda)\, \varepsilon^{2({\nLambda}+1)}\,\norm{f}_{\fock{\phi}}^2,
 \end{align*}
which yields \ref{cond:interpolation_3}.

\noindent {\bf Step 3}. We show that $\left(\tilde\Lambda,m_{\tilde\Lambda}\right)$ is an interpolating set.

Let $\tilde{a} \in \ell_\phi^2{\pairsetmult{\tilde\Lambda}}$. We define inductively $\tilde a^{(k)}\in \ell_\phi^2{\pairsetmult{\tilde\Lambda}}$ and $f_k\in\fock{\phi}$. Let $\tilde{a}^{(1)}:=\tilde{a}$. By Step 2, there exists $f_1\in\fock{\phi}$ satisfying \ref{cond:interpolation_1}, \ref{cond:interpolation_2} and \ref{cond:interpolation_3} for $\tilde{a}^{(1)}$.
Let $k\geq 2$ and suppose that $\tilde a^{(k-1)} \in \ell_\phi^2{\pairsetmult{\tilde\Lambda}}$
and $f_{k-1}\in\fock{\phi}$ are already defined and satisfy
\ref{cond:interpolation_1}, \ref{cond:interpolation_2} and \ref{cond:interpolation_3} with respect to $\tilde a^{(k-1)}$. Define ${\tilde{a}}^{(k)}\in \ell_\phi^2{\pairsetmult{\tilde\Lambda}}$ by
\begin{equation}  \label{eq:recursion}
	{\tilde{a}}_{(\lambda,j)}^{(k)} :=
	\begin{cases}
	0 &\quad\text{if }\lambda\in\Lambda\text{ and } 0\leq j \leq m_{\tilde\Lambda}(\lambda)-1,\\
	{\tilde{a}}_{(\lambda,0)}^{(k-1)}-f_{k-1}(\lambda) &\quad\text{if }\lambda\in\Lambda'\text{ and } j=0.\\
	\end{cases}
\end{equation}
Since $\norm{\tilde{a}^{(k)}}_{\ell_\phi^2\pairsetmult{\tilde\Lambda}}=\norm{f_{k-1}-{\tilde{a}}_{(\cdot,0)}^{(k-1)}}_{\ell_\phi^2(\Lambda')}\leq C_{\Lambda} \varepsilon^{{\nLambda}+1}\norm{f_{k-1}}_{\fock{\phi}}<\infty$,
the sequence is indeed well-defined, and we can apply Step 2 for $\tilde{a}^{(k)}$. Let $f_k$ satisfy \ref{cond:interpolation_1}, \ref{cond:interpolation_2} and \ref{cond:interpolation_3} with respect to $\tilde{a}^{(k)}$.

The constructed sequences satisfy
	$$\norm{f_{k}}_{\fock{\phi}}\leq C_\Lambda  \varepsilon^{-{\nLambda}}\norm{\tilde{a}^{(k)}}_{\ell_\phi^2\pairsetmult{\tilde\Lambda}}\leq\varepsilon\, C_\Lambda^{'} \norm{f_{k-1}}_{\fock{\phi}},$$ and,
	$$\norm{\tilde{a}^{(k)}}_{\ell_\phi^2\pairsetmult{\tilde\Lambda}} \leq\varepsilon\, C_\Lambda^{'}\, \norm{\tilde{a}^{(k-1)}}_{\ell_\phi^2{\pairsetmult{\tilde\Lambda}}}.$$
	Hence, if $\varepsilon\, C_\Lambda^{'} < 1$, $f:=\sum_{k\in\bN}f_k$ converges in $\fock\phi{}$.

	To see that interpolates with the right values 
	on $\Lambda^{'}$,
	we observe that for each $\lambda\in\Lambda_{\text{max}}$,
\begin{align*}
\left|\sum_{k=1}^{N} f_k(\lambda')-\tilde{a}^{(1)}_{(\lambda',0)}\right|^2 e^{-\phi(\lambda')} &= \left|\tilde{a}^{(N+1)}_{(\lambda',0)}\right|^2 e^{-\phi(\lambda')} \\ &\leq \norm{\tilde{a}^{(N+1)}}^2_{\ell_\phi^2(\Lambda')} \xrightarrow[N \to \infty]{} 0.
\end{align*}
Thus, $f(\lambda')=\tilde{a}^{(1)}_{(\lambda',0)}$, when $\lambda \in \Lambda_{\text{max}}$. For $\lambda\in \tilde{\Lambda} \setminus\Lambda'$ we argue as follows.
Since $\sum_{k\in\bN}f_k$ converges in $\fock\phi{}$, by Lemma \ref{lemma:compare_dbarstar_normf}, $\dbarstar{f}{j} = \sum_{k\in\bN} \dbarstar{f_k}{j}$. As $\dbarstar{f_k}{j}(\lambda)=0$ for $k\geq2$, $\dbarstar{f}{j}(\lambda)=\dbarstar{f_1}{j}(\lambda)=  \tilde{a}_{(\lambda,j)}$,
	if $0 \leq j \leq m_{\tilde\Lambda}(\lambda)-1$.
	
	We have thus shown that $(\tilde\Lambda,m_{\tilde\Lambda})$ is an interpolating set.

{\bf Step 4.} We show that $D^{\pm}(\Lambda, m_{\Lambda})=D^{\pm}(\tilde\Lambda, m_{\tilde\Lambda})$.

 The number of points of the set $\Lambda$ in the disk of center $z$ and radius $r$, counted with multiplicities, satisfies $
\numberofpoints{z}{r}{\Lambda}{\Lambda} \leq
\numberofpoints{z}{r+\varepsilon}{\tilde\Lambda}{\tilde\Lambda}$.
Since $0<m \leq \Delta \phi \leq M$ then for any $\delta>0$, there exists $r_{\delta}$ such that if $r > r_{\delta}$, then  $\sup_{z \in \mathbb{C}} \left|\frac{\int_{B(z, r+\varepsilon)} \Delta \phi}{\int_{B(z, r)} \Delta \phi}-1\right|<\delta$. Thus,
\begin{align*}
(1+\delta)^{-1} \cdot
D_{\phi}^{-}\left(\Lambda, m_{\Lambda}\right)&\leq\liminf_{r \rightarrow \infty} \inf _{z \in \mathbb{C}} \frac{\numberofpoints{z}{r+\varepsilon}{\tilde\Lambda}{\tilde\Lambda}}{\int_{B(z, r+\varepsilon)} \Delta \phi\, \dA}=D_{\phi}^{-}\left(\tilde\Lambda, m_{\tilde\Lambda}\right).
\end{align*}
We let $\delta \rightarrow 0$.
Exchanging the roles of $\Lambda$ and $\tilde\Lambda$ we also see that
 $D_{\phi}^{-}\left(\tilde\Lambda, m_{\tilde\Lambda}\right) \leq D_{\phi}^{-}\left(\Lambda, m_{\Lambda}\right) $. Analogously, $D_{\phi}^{+}\left(\Lambda, m_{\Lambda}\right) = D_{\phi}^{+}\left(\tilde\Lambda, m_{\tilde\Lambda}\right)$.
\end{proof}

\subsection{Sampling}\label{sec:nec_ss}

The modification of sampling sets to reduce their multiplicity is based on the following perturbation lemma.

\begin{lemma}\label{lemma:dbarstar_n} Let $\lambda,\lambda' \in \bC$ and $0<\varepsilon<1/4$. If
	$f\in\fock{\phi}$ and  $|\lambda'-\lambda|=\varepsilon$, there exists a constant $C_{\varepsilon}$ such that
	$$|\dbarstar{f}{{\nLambda}}(\lambda)|^2 e^{-\phi(\lambda)} \lesssim C_{\varepsilon}\left( |f(\lambda')|^2 e^{-\phi(\lambda')} + \sum_{j=0}^{{\nLambda}-1}  |\dbarstar{f}{j}(\lambda)|^2 e^{-\phi(\lambda)} \right) +  \varepsilon \norm{f}_{L^2(B(\lambda,1),e^{-\phi})}^2 .$$
\end{lemma}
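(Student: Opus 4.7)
The plan is to work on the Riesz decomposition $\phi = h_\lambda + G[\Delta\phi]$ on $B(\lambda,1)$, write $h_\lambda = 2\operatorname{Re} H_\lambda$ with $H_\lambda$ holomorphic as in Lemma \ref{lemma:bounds_potentials}, and transfer the problem to the \emph{holomorphic} function $F := f e^{-H_\lambda}$. Since $|F(z)|^2 = |f(z)|^2 e^{-h_\lambda(z)}$ and $|G[\Delta\phi](z)|$ is uniformly bounded on $B(\lambda,1)$, the quantities in the statement will be directly comparable to quantities involving $F$. The strategy is then to use a Taylor expansion of $F$ at $\lambda$, evaluated at $\lambda'$, to control $F^{({\nLambda})}(\lambda)$ by $F(\lambda')$, the lower derivatives $F^{(k)}(\lambda)$, and a small error.

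First, I apply Lemma \ref{lemma:relationship} (formula \eqref{eq:d_dbar}) with $k={\nLambda}$. Because $B_0^{G[\Delta\phi]} \equiv 1$, the leading term is $(-1)^{{\nLambda}}\,\dbarstar{f}{{\nLambda}}(\lambda)\, e^{-H_\lambda(\lambda)}$, while the remaining Bell polynomials $B_{{\nLambda}-j}^{G[\Delta\phi]}(\lambda)$ with $j<{\nLambda}$ are uniformly bounded by Lemma \ref{lemma:bounds_potentials} (this is where compatibility of $(\Lambda,m_\Lambda)$ with $\phi$ enters). Solving for $\dbarstar{f}{{\nLambda}}(\lambda)$, squaring, multiplying by $e^{-\phi(\lambda)}$ and using $|\phi(\lambda)-h_\lambda(\lambda)|=|G[\Delta\phi](\lambda)|\leq C$, gives
$$
|\dbarstar{f}{{\nLambda}}(\lambda)|^2 e^{-\phi(\lambda)} \lesssim |F^{({\nLambda})}(\lambda)|^2 + \sum_{j=0}^{{\nLambda}-1} |\dbarstar{f}{j}(\lambda)|^2 e^{-\phi(\lambda)}.
$$

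Second, I apply Remark \ref{rem:equation} to $F$ with $n={\nLambda}$ and the point $\lambda'$ (so $|\lambda'-\lambda|=\varepsilon<1/4$), solving for the top derivative. The Taylor remainder is controlled by $\varepsilon^{{\nLambda}+1}\int_{B(\lambda,1)}|F|\,\dA$, and after squaring and redistributing the $\varepsilon$-powers I obtain
$$
|F^{({\nLambda})}(\lambda)|^2 \lesssim \varepsilon^{-2{\nLambda}}\,|F(\lambda')|^2 + \sum_{k=0}^{{\nLambda}-1}\varepsilon^{-2({\nLambda}-k)}|F^{(k)}(\lambda)|^2 + \varepsilon^{2}\|F\|_{L^2(B(\lambda,1))}^2.
$$

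Finally, I translate everything back from $F$ to $f$. Boundedness of $G[\Delta\phi]$ on $B(\lambda,1)$ yields $|F(\lambda')|^2 \asymp |f(\lambda')|^2 e^{-\phi(\lambda')}$ and $\|F\|_{L^2(B(\lambda,1))}^2 \asymp \|f\|_{L^2(B(\lambda,1),e^{-\phi})}^2$. For the intermediate terms I re-apply \eqref{eq:d_dbar} at order $k\leq {\nLambda}-1$, together with the uniform bound on the relevant Bell polynomials, to get $|F^{(k)}(\lambda)|^2 \lesssim e^{-h_\lambda(\lambda)}\sum_{j=0}^{k}|\dbarstar{f}{j}(\lambda)|^2 \asymp e^{-\phi(\lambda)}\sum_{j=0}^{k}|\dbarstar{f}{j}(\lambda)|^2$. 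Lumping all $\varepsilon$-dependent constants into $C_\varepsilon$ and noting that $\varepsilon^2\leq\varepsilon$ for $\varepsilon<1/4$, I obtain the stated inequality. The main obstacle is the bookkeeping of signs and Bell polynomials when inverting Lemma \ref{lemma:relationship} at the top order ${\nLambda}$; this inversion is clean only because $B_0\equiv 1$ makes the leading coefficient equal to $(-1)^{{\nLambda}}e^{-H_\lambda(\lambda)}$, so that every Bell-polynomial complication is confined to the lower-order $j<{\nLambda}$ terms which are permitted on the right-hand side of the target inequality.
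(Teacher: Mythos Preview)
Your proof is correct and follows essentially the same strategy as the paper: Riesz decomposition on $B(\lambda,1)$, pass to the holomorphic function $F=fe^{-H_\lambda}$, Taylor-expand $F$ at $\lambda$ to extract $F^{({\nLambda})}(\lambda)$ in terms of $F(\lambda')$, lower derivatives, and a remainder of size $\varepsilon^{{\nLambda}+1}\!\int_{B(\lambda,1)}|F|$, then translate back using the boundedness of $G[\Delta\phi]$. The only difference is in the conversion step between $\dbarstar{f}{j}(\lambda)$ and $\partial^k(fe^{-H_\lambda})(\lambda)$: the paper routes this through Lemma~\ref{lemma:bound_partial_f_eh} (passing via $fe^{-H_\lambda-G[\Delta\phi]}$ and picking up a full sum $\sum_{j=0}^{{\nLambda}}|\partial^j(fe^{-H_\lambda})(\lambda)|^2$), whereas you invert the exact identity \eqref{eq:d_dbar} from Lemma~\ref{lemma:relationship} directly, using $B_0\equiv 1$ to isolate the top-order term cleanly. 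Your route is slightly more streamlined but not conceptually different.
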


\begin{proof}
	We apply Theorem \ref{thm:Riesz_decomp} on ${B(\lambda,1)}$ and obtain $\phi(z)=h_\lambda(z)+G[\Delta\phi](z)$. Since $h_\lambda$ is harmonic, there exists $H_\lambda:B(\lambda,1)\longrightarrow\bC$ holomorphic such that $\operatorname{Re}{H_\lambda}=h_\lambda/2$. Therefore,	\begin{equation*}%\label{eq:lemma_ineq_1}
	\begin{aligned}
	\left|\dbarstar{f}{{\nLambda}}(\lambda)\right|^2 e^{-\phi(\lambda)} &=
	\left|\left(\partial^{\nLambda}(fe^{-\phi})e^{\phi} \right)(\lambda)\right|^2 e^{-\phi(\lambda)}
	\\
	&=   \left|\left(\partial^{\nLambda}\left( f e^{-\phi}\right)\right)(\lambda)\right|^2 e^{\phi(\lambda)}    \\
	&=   \left|\left(\partial^{\nLambda} \left(f e^{-H_\lambda-\overline{H_\lambda}-G[\Delta\phi]}\right)\right)(\lambda)\right|^2 e^{\phi(\lambda)}.
	\end{aligned}\end{equation*}
	Since $\partial(e^{-\overline{H_\lambda}})=\partial(\overline{e^{-H_\lambda}})=\overline{\left(\partial_{\overline{z}} {e^{-H_\lambda}}    \right)}\equiv 0,$
	\begin{equation}\label{eq:lemma_ineq_2}
	\begin{aligned}
	\left|\partial^{\nLambda} \left(f e^{-H_\lambda-\overline{H_\lambda}-G[\Delta\phi]}\right)(\lambda)\right|^2 e^{\phi(\lambda)} &=   \left|\partial^{\nLambda} \left(f e^{-H_\lambda-G[\Delta\phi]}\right)(\lambda)\right|^2 \left|e^{-\overline{H_\lambda(\lambda)}} \right|^2 e^{\phi(\lambda)}  \\
	&=   \left|\partial^{\nLambda} \left(f e^{-H_\lambda-G[\Delta\phi]}\right)(\lambda)\right|^2 e^{-h_\lambda(\lambda)} e^{\phi(\lambda)}
	\\
	&\lesssim
	\left|\partial^{\nLambda} \left(f e^{-H_\lambda-G[\Delta\phi]}\right)(\lambda)\right|^2, 
	\end{aligned}\end{equation}
	where we used that $-h_\lambda(\lambda)+\phi(\lambda)=G[\Delta\phi](\lambda)$ is uniformly bounded, by Lemma \ref{lemma:bounds_potentials}.
	In addition, by Lemma \ref{lemma:bound_partial_f_eh},
	\begin{align}
	\label{eq:sum_derivatives}
	\left|\dbarstar{f}{{\nLambda}}(\lambda)\right|^2 e^{-\phi(\lambda)}\lesssim
	\left|\partial^{\nLambda} (f e^{-H_\lambda-G[\Delta\phi]})(\lambda)\right|^2 
	&\lesssim \sum_{j=0}^{{\nLambda}} \left|\partial^j (f e^{-H_\lambda})(\lambda)\right|^2 .
	\end{align}

	We proceed to estimate $\left|(\partial^{\nLambda} (f e^{-H_\lambda}))(\lambda)\right|$.
 The Taylor expansion for $fe^{-H_\lambda}$ of order ${\nLambda}$ at $\lambda$ evaluated at $\lambda'$ is
	$$\left(fe^{-H_\lambda}\right)(\lambda')=\sum_{k= 0}^{\nLambda} \frac{\partial^k{(fe^{-H_\lambda})}(\lambda)}{k!}(\lambda'-\lambda)^k+E_{\nLambda}(\lambda').$$

	Hence,
	\begin{align*}
	&{\partial^{\nLambda} \left(fe^{-H_\lambda}\right)}(\lambda)
	\\
	&\quad=\frac{{\nLambda}!}{(\lambda'-\lambda)^{\nLambda}}{(fe^{-H_\lambda})}(\lambda') - \frac{{\nLambda}!}{(\lambda'-\lambda)^{\nLambda}} \sum_{k=0}^{{\nLambda}-1} \frac{{\partial^k (fe^{-H_\lambda})}(\lambda)}{k!}(\lambda'-\lambda)^k-\frac{{\nLambda}!}{(\lambda'-\lambda)^{\nLambda}}E_{\nLambda}(\lambda'),
	\end{align*}
and for $|\lambda'-\lambda|=\varepsilon$,
	$$\left|{\partial^{\nLambda}(fe^{-H_\lambda})}(\lambda)\right|^2 \lesssim C_{\varepsilon, {\nLambda}} \left(|{(fe^{-H_\lambda})}(\lambda')|^2 + \sum_{k=0}^{{\nLambda}-1} |\partial^k(fe^{-H_\lambda})(\lambda)|^2\right)+\frac{|E_{\nLambda}(\lambda')|^2}{\varepsilon^{2n}}.$$

	We use the last estimate, together with 
	Lemma \ref{lemma:bound_partial_f_eh} and the fact that $\phi(z)-h_\lambda(z)$ is bounded, and \eqref{eq:sum_derivatives} to obtain:
	\begin{align*}
&\left|\dbarstar{f}{{\nLambda}}(\lambda)\right|^2 e^{-\phi(\lambda)} \lesssim
\sum_{j=0}^{{\nLambda}} \left|\partial^j (f e^{-H_\lambda})(\lambda)\right|^2 
\\
&\quad\lesssim C_{\varepsilon,{\nLambda}} \left|(f e^{-H_\lambda})(\lambda')\right|^2 + {C_{\varepsilon,{\nLambda}}} \sum_{j=0}^{{\nLambda}-1} \left|(\partial^j (f e^{-H_\lambda}))(\lambda)\right|^2 +\frac{|E_{\nLambda}(\lambda')|^2}{\varepsilon^{2n}}
\\
	&\quad\lesssim
 C_{\varepsilon,{\nLambda},\phi}\Big( \left|(f e^{-H_\lambda})(\lambda')\right|^2 + \sum_{j=0}^{{\nLambda}-1} \left|	\partial^j \left(f e^{-H_\lambda-G[\Delta\phi]} \right)(\lambda) \right|^2\Big) +\frac{|E_{\nLambda}(\lambda')|^2}{\varepsilon^{2n}}	\\
	&\quad\lesssim
	C_{\varepsilon,{\nLambda},\phi}^{'} \Big( \left|(f e^{-H_\lambda})(\lambda')\right|^2 + \sum_{j=0}^{{\nLambda}-1} \left|	\partial^j \left(f e^{-H_\lambda-G[\Delta\phi]} \right)(\lambda) \right|^2 e^{-h_\lambda(\lambda)+\phi(\lambda)}    \Big) +\frac{|E_{\nLambda}(\lambda')|^2}{\varepsilon^{2n}}  \\
	%&\quad= C_{\varepsilon,{\nLambda},\phi}^{'} \left( \left|(f e^{-H_\lambda})(\lambda')\right|^2 + \sum_{j=0}^{{\nLambda}-1} \left|	\partial^j \left(f e^{-\phi} \right)(\lambda) \right|^2 e^{\phi(\lambda)}    \right) +\frac{|E_{\nLambda}(\lambda')|^2}{\varepsilon^{2n}}\nonumber \\
	&\quad= C_{\varepsilon,{\nLambda},\phi}^{'} \Big( \left|f (\lambda')\right|^2 e^{-h_\lambda(\lambda')} + \sum_{j=0}^{{\nLambda}-1} \left|	\partial^j \left(f e^{-\phi} \right)(\lambda) \right|^2 e^{\phi(\lambda)}    \Big) +\frac{|E_{\nLambda}(\lambda')|^2}{\varepsilon^{2n}} \\
	&\quad\leq C_{\varepsilon,{\nLambda},\phi}^{''} \Big( \left|f (\lambda')\right|^2 	e^{-\phi(\lambda')} + \sum_{j=0}^{{\nLambda}-1} \left|	\partial^j \left(f e^{-\phi} \right)(\lambda) \right|^2 e^{\phi(\lambda)}    \Big) +\frac{|E_{\nLambda}(\lambda')|^2}{\varepsilon^{2n}}\nonumber \\
	&\quad= C_{\varepsilon,{\nLambda},\phi}^{''} \Big( \left|f (\lambda')\right|^2 	e^{-\phi(\lambda')} + \sum_{j=0}^{{\nLambda}-1} \left|	\partial^j \left(f e^{-\phi} \right)(\lambda) e^{\phi(\lambda)} \right|^2 e^{-\phi(\lambda)}    \Big) +\frac{|E_{\nLambda}(\lambda')|^2}{\varepsilon^{2n}} .
	\end{align*}
Moreover, if $|\lambda'-\lambda|=\varepsilon$, the remainder of the Taylor expansion satisfies
\eqref{eq:F_lamda_prime_2} with $F=f e^{-H_\lambda}$, and, therefore,
	\begin{align*}
	\left|\dbarstar{f}{{\nLambda}}(\lambda) \right|^2 e^{-\phi(\lambda)} &\lesssim C_{\varepsilon} \Big(|f(\lambda')|^2 e^{-\phi(\lambda')} + \sum_{j=0}^{{\nLambda}-1}|\dbarstar{f}{j}(\lambda)|^2 e^{-\phi(\lambda)}\Big)
	+ \varepsilon^{2} \norm{f}_{L^2(B(\lambda,1),e^{-\phi})}^2.
	\end{align*}
	The claim follows since $\varepsilon^2 < \varepsilon$.
\end{proof}

We can now modify a sampling set to reduce its maximal multiplicity without altering its sampling property.

\begin{prop}\label{prop:ss_decreasing}
Let $\left(\Lambda, m_{\Lambda}\right)$ be a separated set with multiplicities that is compatible with $\phi$.
Assume that $(\Lambda,m_{\Lambda})$ is sampling for $\fock{\phi}(\bC)$, and that $\sup_{\lambda\in\Lambda} m_{\Lambda}(\lambda) = {\nLambda}+1 \geq 2$. Then there exists a separated set with multiplicities $(\tilde\Lambda, m_{\tilde\Lambda})$ that is sampling for \fock{\phi}, and such that $\sup_{\lambda\in\tilde\Lambda} m_{\tilde\Lambda}(\lambda)={\nLambda}$ and $D^{\pm}(\Lambda, m_{\Lambda})=D^{\pm}(\tilde\Lambda, m_{\tilde\Lambda})$.
\end{prop}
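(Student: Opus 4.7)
The plan is to mirror the construction used in Proposition~\ref{prop:is_decreasing} for the interpolation case, with Lemma~\ref{lemma:dbarstar_n} playing the role of perturbation tool: it allows us to trade a top-order sampling datum $\dbarstar{f}{{\nLambda}}(\lambda)$ against the point value $f(\lambda')$ at a nearby auxiliary point, up to lower-order derivatives at $\lambda$ and a small $L^2$ remainder.

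First, I would construct $(\tilde\Lambda,m_{\tilde\Lambda})$ exactly as in Step~1 of Proposition~\ref{prop:is_decreasing}: fix $\varepsilon\in(0,\min\{\rho(\Lambda)/2,1/4\})$ (to be chosen sufficiently small later), let $\Lambda_{\text{max}}=\{\lambda\in\Lambda:m_\Lambda(\lambda)={\nLambda}+1\}$, pick for each $\lambda\in\Lambda_{\text{max}}$ a point $\lambda'\in\bC$ with $|\lambda-\lambda'|=\varepsilon$, set $\Lambda'=\{\lambda':\lambda\in\Lambda_{\text{max}}\}$ and $\tilde\Lambda=\Lambda\cup\Lambda'$, and define $m_{\tilde\Lambda}$ by preserving $m_\Lambda$ on $\Lambda\setminus\Lambda_{\text{max}}$, decreasing it by one on $\Lambda_{\text{max}}$, and assigning multiplicity $1$ to each $\lambda'\in\Lambda'$. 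The choice $\varepsilon<\rho(\Lambda)/2$ guarantees that $\tilde\Lambda$ is separated and $\sup m_{\tilde\Lambda}={\nLambda}$.

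To obtain the lower sampling inequality for $(\tilde\Lambda,m_{\tilde\Lambda})$, I would apply Lemma~\ref{lemma:dbarstar_n} for each $\lambda\in\Lambda_{\text{max}}$ to dominate $|\dbarstar{f}{{\nLambda}}(\lambda)|^2 e^{-\phi(\lambda)}$ by
\[
C_\varepsilon\Big(|f(\lambda')|^2 e^{-\phi(\lambda')}+\sum_{j=0}^{{\nLambda}-1}|\dbarstar{f}{j}(\lambda)|^2 e^{-\phi(\lambda)}\Big)+\varepsilon\,\|f\|_{L^2(B(\lambda,1),e^{-\phi})}^2,
\]
and substitute this into the sampling inequality $A\|f\|_\phi^2\le\sum_{\lambda\in\Lambda}\sum_{j=0}^{m_\Lambda(\lambda)-1}|\dbarstar{f}{j}(\lambda)|^2 e^{-\phi(\lambda)}$ of $(\Lambda,m_\Lambda)$. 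The retained terms on the right reassemble into precisely the sampling sum of $(\tilde\Lambda,m_{\tilde\Lambda})$, while relative separation of $\Lambda_{\text{max}}\subseteq\Lambda$ yields $\sum_{\lambda\in\Lambda_{\text{max}}}\|f\|_{L^2(B(\lambda,1),e^{-\phi})}^2\le\operatorname{rel}(\Lambda)\|f\|_\phi^2$. Shrinking $\varepsilon$ so that $\varepsilon\,\operatorname{rel}(\Lambda)<A/2$ absorbs the remainder on the left. The upper sampling inequality follows directly from Lemma~\ref{lemma:compare_dbarstar_normf}, combined with finite overlap of unit balls around the relatively separated set $\tilde\Lambda$.

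Finally, the density equality $D_\phi^\pm(\Lambda,m_\Lambda)=D_\phi^\pm(\tilde\Lambda,m_{\tilde\Lambda})$ follows verbatim from Step~4 of Proposition~\ref{prop:is_decreasing}, since $\nu_{\tilde\Lambda}$ differs from $\nu_\Lambda$ only by a transfer of one unit of mass from each $\lambda\in\Lambda_{\text{max}}$ to a point at distance $\varepsilon$, and the weighted denominators $\int_{B(z,r)}\Delta\phi\,dA$ are comparable at scales $r$ and $r+\varepsilon$ as $r\to\infty$ thanks to $0<m\le\Delta\phi\le M$. The main obstacle is arranging the perturbation so that the remainder in Lemma~\ref{lemma:dbarstar_n} enters with an absorbable prefactor; this is precisely what the explicit $\varepsilon$ (rather than $C_\varepsilon$) in front of the $L^2$ term buys us, and it is the reason that lemma is formulated with that tunable decoupling.
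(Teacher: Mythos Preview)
Your proposal is correct and follows essentially the same approach as the paper: construct $(\tilde\Lambda,m_{\tilde\Lambda})$ as in Step~1 of Proposition~\ref{prop:is_decreasing}, invoke Lemma~\ref{lemma:dbarstar_n} at each $\lambda\in\Lambda_{\text{max}}$ to replace the top-order term by the value at $\lambda'$ plus lower-order terms and an $\varepsilon$-small $L^2$ remainder, absorb the latter for $\varepsilon$ small, obtain the upper bound from separation via Lemma~\ref{lemma:compare_dbarstar_normf}, and recycle Step~4 for the densities. The paper's proof is organized in the same way, with only cosmetic differences in presentation.
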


\begin{proof}
Since $(\Lambda,m_{\Lambda})$ is a sampling set for $\fock{\phi}(\bC)$,
\begin{align}\label{eq:sampling_inequality}
	\norm{f}^2_{\fock{\phi}} \lesssim \sum_{\lambda \in \Lambda}  \sum_{j=0}^{m_{\Lambda}(\lambda)-1} |\dbarstar{f}{j}(\lambda)|^2 e^{-\phi(\lambda)}.
	\end{align}

	Let $0<\varepsilon<\min\{\rho(\Lambda)/2,\ 1/4\}$ and consider $\Lambda_{\text{max}}$, $\Lambda^{\prime}$, $\tilde\Lambda$ and $m_{\tilde\Lambda}$ as in the proof of Proposition \ref{prop:is_decreasing}. Hence, $D^{\pm}(\Lambda, m_\Lambda)=D^{\pm}(\tilde\Lambda, m_{\tilde\Lambda})$, and $\tilde\Lambda$ is separated. As a consequence, $(\tilde\Lambda, m_{\tilde\Lambda})$ satisfies an upper sampling bound. We now show that if $\varepsilon$ is small enough, a lower sampling bound also holds for $(\tilde\Lambda, m_{\tilde\Lambda})$.

	For $\lambda\in\Lambda_{\text{max}}$, we have $m_{\tilde\Lambda}(\lambda)=n_\Lambda=m_{\Lambda}(\lambda)-1$ and, by Lemma \ref{lemma:dbarstar_n}, \begin{equation*}\sum_{j=0}^{{m_{\Lambda}(\lambda)-1}} |\dbarstar{f}{j}(\lambda)|^2 e^{-\phi(\lambda)} \lesssim C_{\phi,{\nLambda},\varepsilon}\Big( |f(\lambda')|^2 e^{-\phi(\lambda')} + \sum_{j=0}^{{m_{\tilde\Lambda}(\lambda)}-1}  |\dbarstar{f}{j}(\lambda)|^2 e^{-\phi(\lambda)} \Big) + \varepsilon \norm{f}_{L^2(B(\lambda,1),e^{-\phi})}^2 . \end{equation*}
	On the other hand, for $\lambda\in\Lambda\setminus\Lambda_{\text{max}}$,
	$m_{\Lambda}(\lambda)=m_{\tilde\Lambda}(\lambda)$, and the same equation is trivially true.
Therefore, by \eqref{eq:sampling_inequality}, \begin{equation} \norm{f}^2_{\fock{\phi}} \lesssim \sum_{\lambda \in \Lambda}  \sum_{j=0}^{m_{\Lambda}(\lambda)-1} |\dbarstar{f}{j}(\lambda)|^2 e^{-\phi(\lambda)} \lesssim C_{\varepsilon,\phi,{\nLambda}}^\prime \sum_{\tilde\lambda \in \tilde\Lambda}  \sum_{j=0}^{m_{\tilde\Lambda}(\tilde\lambda)-1} |\dbarstar{f}{j}(\tilde\lambda)|^2 e^{-\phi(\tilde\lambda)} + \varepsilon \operatorname{rel}(\Lambda) \norm{f}^2_{\fock{\phi}}. \end{equation}
	Thus, taking $\varepsilon$ small enough, the term $\varepsilon \operatorname{rel}(\Lambda) \norm{f}^2_{\fock{\phi}}$ can be absorbed into the left-hand side, yielding the desired lower sampling bound.
\end{proof}

\section{Proof of the main results}\label{sec:proof_main_result}

\begin{proof}[Proof of Theorem A]
	Suppose that $(\Lambda, m_\Lambda)$ is set of interpolation for $\fock{\phi}$. Then the unweighted set $\Lambda$ is also a set of interpolation for $\fock{\phi}$, and therefore separated; see, e.g., \cite[Prop. 3]{ortega-seip-98}. Repeated application of Proposition \ref{prop:is_decreasing} shows that there exists a set of interpolation $\tilde\Lambda\subseteq\bC$ without multiplicity such that $D_\phi^{+}(\Lambda, m_\Lambda)=D_\phi^{+}(\tilde\Lambda)$. As shown in \cite[Theorem 2]{ortega-seip-98}, $\tilde\Lambda$ must satisfy $D_\phi^{+}(\tilde\Lambda) < \density{}$. Note that in \cite{ortega-seip-98}, the space $\fock{\phi}$ is denoted $\fock{2\phi}$.

	Towards the sufficiency of the density conditions, let $(\Lambda, m_\Lambda)$ be a separated set with multiplicities, satisfying $D_{\phi}^{+}(\Lambda,m_{\Lambda}) < \density{}$. Hence, for some $\varepsilon > 0$ we have $D_{\phi}^{+}(\Lambda,m_{\Lambda}) < \density{} - \varepsilon$. Thus, for a certain $R > 0$ and for all $z\in\bC$,
	$$\frac{\numberofpoints{z}{r}{\Lambda}{\Lambda}}{|B(z,R)|} < \left(\ddensity{}-\varepsilon\right) \dashint_{B(z,R)} \Delta\phi\, \dA,$$
	where the bar in the integral denotes an average (division by the total measure).
	Let $\chi_{R}=\left(\pi R^{2}\right)^{-1} 1_{B(0, R)}$, $\tilde\phi=\phi\star\chi_R$, and $\nu_{\Lambda} :=\sum_{\lambda \in \Lambda} m_\Lambda (\lambda) \cdot  \delta_{\lambda}$. Then
\begin{align*}
	\pi\frac{\numberofpoints{z}{r}{\Lambda}{\Lambda}}{|B(z,R)|}&=\pi \nu_{\Lambda} \star \chi_R (z) < \left(1-\pi \varepsilon\right) \dashint_{B(z,R)} \Delta\phi\, \dA
	\\
	&= \left(1-\pi \varepsilon\right) \Delta\tilde\phi(z)
	< \Delta\tilde\phi(z)-\tfrac{\pi\varepsilon}{2}\inf_{w \in \mathbb{C}} \Delta\phi(w).
	\end{align*}
	We wish to apply Proposition \ref{prop:interpolation_sufficiency}, with the roles of the weights $\phi$ and $\tilde\phi$  interchanged. The density condition \eqref{eq_bbb} is verified for $\tilde\phi$, because of the previous equation. In addition, $\Delta \tilde\phi$ is continuous, 
	and	$\Delta \tilde\phi$ is bounded above and below with the same constants that bound $\Delta \phi$. Moreover, we claim that
	\begin{align}\label{eq_ccc}
	\norm{\partial^{j} (\phi - \tilde\phi)}_\infty < \infty, \qquad 0 \leq j \leq n_\Lambda.
	\end{align}
	To see this, let $z_0 \in \mathbb{C}$ and let
	\begin{align}\label{eq_ddd}
	\phi(w) = h(w) + G[\Delta \phi](w), \qquad w \in B(z_0,R+1),
	\end{align}
	be the Riesz decomposition of $\phi$ on $B(z_0,R+1)$, as given by Theorem \ref{thm:Riesz_decomp}.
	Let $z \in B(z_0,1)$. We average \eqref{eq_ddd} over $B(z,R)$ and use the mean value property for harmonic functions to conclude
	\begin{align*}
	\tilde\phi(z) = h(z) + G[\Delta \phi]*\chi_{R}(z),
	\end{align*}
	and, therefore,
	\begin{align*}
	\phi(z)-\tilde\phi(z) = G[\Delta \phi](z) - G[\Delta \phi]*\chi_{R}(z), \qquad z \in B_1(z_0).
	\end{align*}
	The desired number of derivatives of each term on the right-hand side of the previous equation is bounded
	independently of $z_0$ by Lemma \ref{lemma:bounds_potentials}, \eqref{eq_fff} (the bound does depend on $R$). Hence, \eqref{eq_ccc} follows.
	We can therefore apply Proposition \ref{prop:interpolation_sufficiency}, with the roles of the weights $\phi$ and $\tilde\phi$ interchanged, and conclude that $\left(\Lambda, m_\Lambda\right)$ is an interpolating set for $\fock{\phi}$ (indeed, $\fock{\phi}=\fock{\tilde\phi}$,
	and the interpolation property is considered with respect to $\bar\partial^*_\phi$.)
\end{proof}

\begin{proof}[Proof of Theorem B]
	Suppose that $(\Lambda, m_{\Lambda})$ is sampling for \fock{\phi}. Then, for some constant $B>0$,
	$$
	\sum_{\lambda \in \Lambda} \left|f(\lambda)\right|^{2}  e^{-\phi(\lambda)} \leq
	\sum_{\lambda \in \Lambda}    \sum_{k=0}^{m_{\Lambda}(\lambda)-1} \left|\dbarstar{f(\lambda)}{k}\right|^{2} e^{-\phi(\lambda)} \leq B\|f\|_{\phi}^{2}.
	$$
The upper sampling bound without multiplicities already implies that $\Lambda$ is relatively separated; see, e.g., \cite[Prop. 1]{ortega-seip-98}. The existence of a separated set $\Lambda'\subseteq\Lambda$ which is also sampling for $\fock{\phi}$ follows from standard arguments (see for instance \cite[Theorem 3.7]{MR2729877}). Without loss of generality we assume that $\Lambda$ is already separated.
	By repeated application of Proposition \ref{prop:ss_decreasing}, there is a sampling set $\tilde\Lambda\subseteq\bC$ without multiplicities, such that $D_\phi^{-}(\Lambda, m_\Lambda)=D_\phi^{-}(\tilde\Lambda)$. As shown in \cite[Theorem 1]{ortega-seip-98}, $\tilde\Lambda$ must satisfy $D_\phi^{-}(\tilde\Lambda) > \density{}$.

Conversely, suppose that $(\Lambda, m_\Lambda)$ is a relatively separated set with multiplicities containing a separated subset $\Lambda^\prime$, satisfying $D_{\phi}^{-}\left(\Lambda^{\prime}, \restr{m_\Lambda}{\Lambda^\prime}\right)>\density{}$. The upper sampling bound follows from the fact that $\Lambda$ is relatively separated and Lemma \ref{lemma:compare_dbarstar_normf}. For the lower sampling bound we assume without loss of generality that $\Lambda=\Lambda^\prime$. Proceeding as in the proof of Theorem A, we consider $\tilde\phi = \phi \star \chi_R$, for some large $R>0$ and conclude from Proposition \ref{prop:sampling-sufficiency},  that $\left(\Lambda, m_\Lambda\right)$ is a sampling set for $\fock{\tilde\phi}$. By \eqref{eq_ccc}, $\fock{\phi}$ and $\fock{\tilde\phi}$ contain the same functions and have equivalent norms. Moreover, for each $f\in\fock{\phi}$,
	we combine \eqref{eq:relation_operators_phi_phi_average} and \eqref{eq_ccc} to conclude that
$$\norm{f}_{\phi}^2 \lesssim \norm{f}_{\tilde\phi}^2 \lesssim \sum_{\lambda \in \Lambda}  \sum_{k=0}^{m_{\Lambda}(\lambda)-1} \left|\dbarstarr{f(\lambda)}{k}{\tilde\phi}\right|^{2} e^{-\tilde\phi(\lambda)} \lesssim \sum_{\lambda \in \Lambda}\sum_{k=0}^{m_{\Lambda}(\lambda)-1} \left|\dbarstarr{f(\lambda)}{k}{\phi}\right|^{2} e^{-\phi(\lambda)}.$$
This gives the desired bound.
\end{proof}

%\section{Data Availability Statement}
%All data generated or analysed during this study are included in this published article.

\appendix\section{Bell Polynomials and Fa\`{a} di Bruno's formula}\label{sec:Bell}

The Bell polynomials appear naturally when calculating the $n$-th derivative of a composite function. For a deeper discussion of this topic we refer the reader to \cite[pp. 95--98]{MR1492593} and for the proofs \cite{bell1934}.
\subsection{Bell polynomials}
Let $n, k \in \mathbb{N}$ such that $k \leq n$. The partial exponential Bell polynomials are a collection of polynomials given by
\begin{equation*}
B_{n, k}\left(x_{1}, x_{2}, \ldots, x_{n-k+1}\right)=\sum \frac{n !}{m_{1} ! m_{2} ! \cdots m_{n-k+1} !}\left(\frac{x_{1}}{1 !}\right)^{m_{1}}\left(\frac{x_{2}}{2 !}\right)^{m_{2}} \cdots\left(\frac{x_{n-k+1}}{(n-k+1) !}\right)^{m_{n-k+1}}
\end{equation*}
where the sum is taken over all sequences $m_{1}, m_{2}, m_{3}, \ldots, m_{n-k+1}$ of non-negative integers such that the following conditions are satisfied:
\begin{equation*}
\begin{array}{l}{m_{1}+m_{2}+\cdots+m_{n-k+1}=k} \\ {m_{1}+2 m_{2}+3 m_{3}+\cdots+(n-k+1) m_{n-k+1}=n}\end{array} .
\end{equation*}
If $n \geq 1$, the sum
\begin{equation}\label{eq:def_bell_polynomial}
B_{n}\left(x_{1}, \ldots, x_{n}\right)=\sum_{k=1}^{n} B_{n, k}\left(x_{1}, x_{2}, \ldots, x_{n-k+1}\right)
\end{equation}
is called the $n-$th complete exponential Bell polynomial.
If $n=0$, then
$$
B_{0} := 1.
$$

\begin{remark}\label{remark:constant_term}
	The partial Bell polynomials $B_{n,k}$ are homogeneous polynomials of degree k, therefore it follows from \eqref{eq:def_bell_polynomial} that the $n-$th complete Bell polynomial $B_n$ has no constant term as long as $n \geq 1$.
\end{remark}

The complete Bell polynomials satisfy the binomial type relation:
\begin{equation}\label{eq:bell_decomposition}
B_{n}\left(x_{1}+y_{1}, \ldots, x_{n}+y_{n}\right)=\sum_{i=0}^{n}\binom{n}{i} B_{n-i}\left(x_{1}, \ldots, x_{n-i}\right) B_{i}\left(y_{1}, \ldots, y_{i}\right).
\end{equation}

\subsection{Chain rule for higher derivatives}
 Suppose that $f$ and $r$ are $n$-times differentiable functions, then
\begin{equation*}
\frac{d^{n}}{d x^{n}} f(r(x))=\sum_{k=1}^{n} f^{(k)}(r(x)) \cdot B_{n, k}\left(r^{\prime}(x), r^{\prime \prime}(x), \ldots, r^{(n-k+1)}(x)\right).
\end{equation*}

In what follows, we use the abbreviation
\begin{align}
B_{n, k}^{r}(x) &:= B_{n, k}\left(r^{\prime}(x), r^{\prime \prime}(x), \ldots, r^{(n-k+1)}(x)\right), \label{not:bell_partial}\\
B_{n}^{r}(x) &:= B_{n}\left(r^{\prime}(x), r^{\prime \prime}(x), \ldots, r^{(n)}(x)\right)\label{not:bell_complete}.
\end{align}

\begin{remark}\label{rem:binomial_sum_polynomial}
	Suppose that $f$ and $r$ are functions both differentiable $n$ times. With the notation introduced in \eqref{not:bell_partial} and \eqref{not:bell_complete}, \eqref{eq:bell_decomposition} can be written as
	\begin{equation}\label{eq_mmma}
	B_{n}^{f+g}=\sum_{i=0}^{n}\binom{n}{i} B_{n-i}^{f} B_{i}^g.
	\end{equation}
\end{remark}

One special case of $\frac{d^{n}}{d x^{n}} f(r(x))$ is when $f(t)=e^t$, obtaining:
\begin{align}\label{eq:exponential_derivative}
\frac{d^{n}}{d x^{n}} e^{r(x)}&=\sum_{k=1}^{n} e^{r(x)} \cdot B_{n, k}^{r}(x) =  e^{r(x)} \sum_{k=1}^{n}  B_{n, k}^{r}(x) =  e^{r(x)} B_{n}^{r}(x).
\end{align}

\subsection{Proof of Equation
	\ref{eq:explicit_formula_interpolation_coeficients}
}\label{sec:equation_coef}

We want to prove that the coefficients $k_j$ defined by
\eqref{eq:explicit_formula_interpolation_coeficients}
for $j\in\{0,\dots,m_\Lambda(\lambda)-1\}$
solve the recursive equation \eqref{eq:interpolation_equivalent_condition}.
We proceed by induction on $j$.
The case $j=0$ is clear. Suppose that for every $0\leq l< j$,
	\begin{equation*} k_l = (-1)^l  c_{l} -  {\sum_{m=0}^{l-1} \binom{l}{m}\ k_{m}
}\ B_{l-m}^{G_{\lambda}-\phi}(\lambda),\end{equation*}
or, equivalently,
	\begin{equation*} (-1)^l  c_{l} =   {\sum_{m=0}^{l} \binom{l}{m}\ k_{m}
}\ B_{l-m}^{G_{\lambda}-\phi}(\lambda).\end{equation*}
We use the inductive hypothesis, together with fact that $B_{0}^{\phi-G_{\lambda}} \equiv 1 \equiv B_{0}^{G_\lambda-\phi}$,
and \eqref{eq_mmma} to compute:
\begin{align}% 
k_j ={}& (-1)^j c_{j} +  {\sum_{l=0}^{j-1} (-1)^l \binom{j}{l}\ c_{l}
}\ B_{j-l}^{\phi-G_\lambda}(\lambda) \nonumber \\
={}& (-1)^j c_{j}  + {\sum_{l=0}^{j-1} \binom{j}{l}\ \left(
	{\sum_{m=0}^{l} \binom{l}{m}\  k_{m}
	}\ B_{l-m}^{G_\lambda-\phi}(\lambda) \right)
}\ B_{j-l}^{\phi - G_\lambda}(\lambda) \nonumber  \\
\begin{split}
={}& (-1)^j c_{j}  +  {\sum_{l=0}^{j} \binom{j}{l}\ \left(
	{\sum_{m=0}^{l} \binom{l}{m}\  k_{m}
	}\ B_{l-m}^{G_\lambda-\phi}(\lambda) \right)
}\ B_{j-l}^{\phi-G_\lambda}(\lambda)\ - \nonumber  \\
& 
\qquad\qquad\quad{\sum_{m=0}^{j} \binom{j}{m}\ k_{m}
}\ B_{j-m}^{G_\lambda-\phi}(\lambda) 
\end{split} \nonumber \\
={}& (-1)^j c_{j}  +  {\sum_{m=0}^{j} \binom{j}{m}\ k_{m}\ \left(
	{\sum_{l=m}^j \binom{j-m}{j-l}\
		B_{(j-m)-(j-l)}^{G_\lambda-\phi}(\lambda)\ B_{j-l}^{\phi - G_\lambda}(\lambda) }\right)\
} -  \nonumber \\
& \qquad\qquad\quad{\sum_{m=0}^{j} \binom{j}{m}\  k_{m}
}\ B_{j-m}^{G_\lambda-\phi}(\lambda) \nonumber \\
={}& (-1)^j c_{j} +  {\sum_{m=0}^{j} \binom{j}{m}\ k_{m}\ \left(
	{\sum_{l=0}^{j-m} \binom{j-m}{l}\
		B_{(j-m)-l}^{G_\lambda-\phi}(\lambda)\ B_{l}^{\phi - G_\lambda}(\lambda) }\right)\
} -  \nonumber \\
& \qquad\qquad\quad {\sum_{m=0}^{j} \binom{j}{m}\  k_{m}
}\ B_{j-m}^{G_\lambda-\phi}(\lambda) \nonumber \\
={}& (-1)^j c_{j} +  k_j + {\sum_{m=0}^{j-1} \binom{j}{m}\ k_{m}\ 
		B_{j-m}^{0} (\lambda)} \  -  {\sum_{m=0}^{j-1} \binom{j}{m}\  k_{m}
}\ B_{j-m}^{G_\lambda-\phi}(\lambda) \ -\  k_j.
\nonumber
%\label{eq:last_line_induction}\ .
\end{align}
Finally, by Remark \eqref{remark:constant_term}, $B_{j-m}^{0}=0$
if $j-m >0$ and we obtain:
\begin{align*}
k_j = (-1)^j c_j -  {\sum_{m=0}^{j-1} \binom{j}{m}\  k_{m}
}\ B_{j-m}^{G_\lambda-\phi}(\lambda),
\end{align*}
as desired.


\begin{thebibliography}{10}
	
	\bibitem{AGH17}
	B.~Adcock, M.~Gataric, and A.~C. Hansen.
	\newblock Density theorems for nonuniform sampling of bandlimited functions
	using derivatives or bunched measurements.
	\newblock {\em J. Fourier Anal. Appl.}, 23(6):1311--1347, 2017.
	
	\bibitem{SR16}
	A.~Antony~Selvan and R.~Radha.
	\newblock Separation of zeros, a {H}ermite interpolation based and a frame
	based reconstruction algorithm for bandlimited functions.
	\newblock {\em Sampl. Theory Signal Image Process.}, 15:21--35, 2016.
	
	\bibitem{MR2729877}
	G.~Ascensi and J.~Bruna.
	\newblock Model space results for the {G}abor and wavelet transforms.
	\newblock {\em IEEE Trans. Inform. Theory}, 55(5):2250--2259, 2009.
	
	\bibitem{bell1934}
	E.~T. Bell.
	\newblock Exponential polynomials.
	\newblock {\em Ann. of Math. (2)}, 35(2):258--277, 1934.
	
	\bibitem{berndtsson-ortega-95}
	B.~Berndtsson and J.~Ortega~Cerd\`a.
	\newblock On interpolation and sampling in {H}ilbert spaces of analytic
	functions.
	\newblock {\em J. Reine Angew. Math.}, 464:109--128, 1995.
	
	\bibitem{be66}
	A.~Beurling.
	\newblock Local harmonic analysis with some applications to differential
	operators.
	\newblock In {\em Some Recent Advances in the Basic Sciences, Vol. 1 (Proc.
		Annual Sci. Conf., Belfer Grad. School Sci., Yeshiva Univ., New York,
		1962--1964)}, pages 109--125. Belfer Graduate School of Science, Yeshiva
	Univ., New York, 1966.
	
	\bibitem{be89}
	A.~Beurling.
	\newblock {\em The collected works of {A}rne {B}eurling. {V}ol. 1}.
	\newblock Contemporary Mathematicians. Birkh\"auser Boston, Inc., Boston, MA,
	1989.
	\newblock Complex analysis, Edited by L. Carleson, P. Malliavin, J. Neuberger
	and J. Wermer.
	
	\bibitem{MR3558232}
	A.~Borichev, A.~Hartmann, K.~Kellay, and X.~Massaneda.
	\newblock Geometric conditions for multiple sampling and interpolation in the
	{F}ock space.
	\newblock {\em Adv. Math.}, 304:1262--1295, 2017.
	
	\bibitem{seip3}
	S.~Brekke and K.~Seip.
	\newblock Density theorems for sampling and interpolation in the
	{B}argmann-{F}ock space. {III}.
	\newblock {\em Math. Scand.}, 73(1):112--126, 1993.
	
	\bibitem{dagr88}
	I.~{D}aubechies and A.~{G}rossmann.
	\newblock {F}rames in the {B}argmann {H}ilbert space of entire functions.
	\newblock {\em Comm. Pure Appl. Math.}, 41(2):151--164, 1988.
	
	\bibitem{MR1814364}
	D.~Gilbarg and N.~S. Trudinger.
	\newblock {\em Elliptic partial differential equations of second order}.
	\newblock Classics in Mathematics. Springer-Verlag, Berlin, 2001.
	\newblock Reprint of the 1998 edition.
	
	\bibitem{GRS20}
	K.~Gr\"{o}chenig, J.~L. Romero, and J.~St\"{o}ckler.
	\newblock Sharp results on sampling with derivatives in shift-invariant spaces
	and multi-window {G}abor frames.
	\newblock {\em Constr. Approx.}, 51(1):1--25, 2020.
	
	\bibitem{GR}
	Ph. Grohs, M. Rathmair.
	\newblock Stable Gabor phase retrieval and spectral clustering.
	\newblock {\em Comm. Pure Appl. Math.} 72 (2019), no. 5, 981–1043. 
	
	\bibitem{MR0460672}
	W.~K. Hayman and P.~B. Kennedy.
	\newblock {\em Subharmonic functions. {V}ol. {I}}.
	\newblock Academic Press [Harcourt Brace Jovanovich, Publishers], London-New
	York, 1976.
	\newblock London Mathematical Society Monographs, No. 9.
	
	\bibitem{MR1492593}
	M.~Hazewinkel, editor.
	\newblock {\em Encyclopaedia of mathematics. {S}upplement. {V}ol. {I}}.
	\newblock Kluwer Academic Publishers, Dordrecht, 1997.
	
	\bibitem{Hormander}
	L.~H\"{o}rmander.
	\newblock {\em An introduction to complex analysis in several variables}.
	\newblock D. Van Nostrand Co., Inc., Princeton, N.J.-Toronto, Ont.-London,
	1966.
	
	\bibitem{landau67}
	H.~J. Landau.
	\newblock Necessary density conditions for sampling and interpolation of
	certain entire functions.
	\newblock {\em Acta Math.}, 117:37--52, 1967.
	
	\bibitem{lyub92}
	Y.~I. Lyubarski{\u\i}.
	\newblock Frames in the {B}argmann space of entire functions.
	\newblock In {\em Entire and subharmonic functions}, pages 167--180.
	Adv. Soviet Math. 11. Amer.
	Math. Soc., Providence, RI, 1992.
	
	\bibitem{ortega-seip-98}
	J.~Ortega-Cerd\`a and K.~Seip.
	\newblock Beurling-type density theorems for weighted {$L^p$} spaces of entire
	functions.
	\newblock {\em J. Anal. Math.}, 75:247--266, 1998.
	
	\bibitem{MR1334766}
	T.~Ransford.
	\newblock {\em Potential theory in the complex plane}, volume~28 of {\em London
		Mathematical Society Student Texts}.
	\newblock Cambridge University Press, Cambridge, 1995.
	
	\bibitem{Raz95}
	H.~N. Razafinjatovo.
	\newblock Iterative reconstructions in irregular sampling with derivatives.
	\newblock {\em J. Fourier Anal. Appl.}, 1(3):281--295, 1995.
	
	\bibitem{MR1485778}
	E.~B. Saff and V.~Totik.
	\newblock {\em Logarithmic potentials with external fields}, volume 316 of {\em
		Grundlehren der Mathematischen Wissenschaften [Fundamental Principles of
		Mathematical Sciences]}.
	\newblock Springer-Verlag, Berlin, 1997.
	\newblock Appendix B by Thomas Bloom.
	
	\bibitem{seip1}
	K.~Seip.
	\newblock Density theorems for sampling and interpolation in the
	{B}argmann-{F}ock space.
	\newblock {\em Bull. Amer. Math. Soc. (N.S.)}, 26(2):322--328, 1992.
	
	\bibitem{seip2}
	K.~Seip and R.~Wallst\'{e}n.
	\newblock Density theorems for sampling and interpolation in the
	{B}argmann-{F}ock space. {II}.
	\newblock {\em J. Reine Angew. Math.}, 429:107--113, 1992.
	
	\bibitem{XinweiYu2011}
	X.~Yu.
	\newblock Lecture notes in {I}ntermediate {PDE}.
	\newblock
	\url{https://web.archive.org/web/20200220142540/https://www.math.ualberta.ca/~xinweiyu/527.1.11f/lec11.pdf}.
	
	\bibitem{MR2934601}
	K.~Zhu.
	\newblock {\em Analysis on {F}ock spaces}, volume 263 of {\em Graduate Texts in
		Mathematics}.
	\newblock Springer, New York, 2012.
	
\end{thebibliography}
\end{document}